\newtheorem{theorem}{Theorem}[section]
\newtheorem{lemma}[theorem]{Lemma}
\newtheorem{prop}[theorem]{Proposition}
\newtheorem{defn}{Definition}[section]
\def \R{\mathbb{R}}
\def \mR{\mathbb{R}}
\def \C{\mathbb{C}}
\def \e{\varepsilon}
\def \eps{\varepsilon}
\def \Om{\Omega}
\def \Lap{\triangle}
\def \grad{\nabla}
\def \half{1/2}
\newcommand{\abs}[1]{\lvert #1 \rvert}          
\newcommand{\norm}[1]{\lVert #1 \rVert}         
\newcommand{\supp}{\mathrm{supp}}
\renewcommand{\phi}{\varphi}
\theoremstyle{definition}
\newtheorem{thm}{Theorem}[section]
\newtheorem*{remark}{Remark}
\newtheorem*{remarks}{Remarks}
\numberwithin{equation}{section}
\newcounter{sidenote}
\begin{document}

\title[Partial data inverse problems for Maxwell]{Partial data inverse problems for Maxwell equations via Carleman estimates}

\author[Chung]{Francis J. Chung}
\address{Department of Mathematics \& Statistics, University of Michigan, Ann Arbor, USA}

\author[Ola]{Petri Ola}
\address{Department of Mathematics \& Statistics, University of Helsinki, Helsinki, Finland}

\author[Salo]{Mikko Salo}
\address{Department of Mathematics \& Statistics, University of Jyv\"{a}skyl\"{a}, Jyv\"{a}skyl\"{a}, Finland}

\author[Tzou]{Leo Tzou}
\address{School of Mathematics and Statistics, Sydney University, Sydney, Australia}

\subjclass[2000]{Primary 35R30}

\keywords{Inverse problems, Maxwell equations, partial data, admissible manifolds, Carleman estimates}

\begin{abstract}
In this article we consider an inverse boundary value problem for the time-harmonic Maxwell equations. We show that the electromagnetic material parameters are determined by boundary measurements where part of the boundary data is measured on a possibly very small set. This is an extension of earlier scalar results of Bukhgeim-Uhlmann and Kenig-Sj\"ostrand-Uhlmann to the Maxwell system. The main contribution is to show that the Carleman estimate approach to scalar partial data inverse problems introduced in those works can be carried over to the Maxwell system.
\end{abstract}

\maketitle


\section{Introduction}

In this paper we discuss an inverse problem for the time-harmonic Maxwell equations with partial data, and show that the electromagnetic material parameters are determined by measurements on certain parts of the boundary. The result is new even for the case of bounded domains in $\mR^3$, but it will be convenient to formulate it more generally on compact manifolds with boundary.

Let $(M,g)$ be a compact oriented Riemannian $3$-manifold with $C^{\infty}$ boundary. The electric and magnetic fields on $M$ are described, respectively, by $1$-forms $E$ and $H$ which satisfy the Maxwell equations in $M$:
\begin{equation}
\label{maxwell equations}
\left\{ \begin{array}{rl}
*dE &\!\!\!= i\omega \mu H, \\
*dH &\!\!\!= -i\omega \eps E.
\end{array} \right.
\end{equation}
Here $\omega > 0$ is a fixed frequency, $d$ is the exterior derivative, and $*$ is the Hodge star operator for the metric $g$. The material parameters $\varepsilon$ and $\mu$ are assumed to be complex valued functions in $C^3(M)$ and to satisfy
\[\mathrm{Re}(\varepsilon)>0,\ \mathrm{Re}(\mu)>0. \]

The inverse problem is formulated in terms of partial measurements of the boundary tangential traces $tE$ and $tH$ of $E$ and $H$. Here, the tangential trace of a $k$-form $\eta$ is defined by 
\begin{equation*}
t: \eta \mapsto i^* \eta,
\end{equation*}
where $i: \partial M \to M$ is the inclusion map. If $\Gamma_1$ and $\Gamma_2$ are open subsets of $\partial M$, we define the partial Cauchy data set
\begin{equation*}
C^{\Gamma_1,\Gamma_2} = \{ (tE|_{\Gamma_1}, tH|_{\Gamma_2}) \,;\, (E,H) \in H^2(M, \Lambda^1 M)^2 \text{ solves \eqref{maxwell equations} and } \supp(tE) \subset \Gamma_1 \}.
\end{equation*}
As described in the end of Section \ref{sec:sec_reductions}, if $\omega$ is outside a discrete set of resonant frequencies, then the knowledge of $C^{\Gamma_1, \Gamma_2}$ is equivalent with knowing the partial admittance map 
$$
\Lambda^{\Gamma_1, \Gamma_2}:  f \mapsto tH|_{\Gamma_2},
$$
where $f$ is a $1$-form on $\partial M$ with $\supp(f) \subset \Gamma_1$, and $(E,H)$ is the unique solution of \eqref{maxwell equations} satisfying $tE = f$ on $\partial M$. This corresponds to prescribing $tE$ on $\Gamma_1$ and measuring $tH$ on $\Gamma_2$. The inverse problem is to determine the coefficients $\eps$ and $\mu$ from the knowledge of $C^{\Gamma_1,\Gamma_2}$ for some choices of $\Gamma_j$.

One may think of the above inverse problem for Maxwell equations as a generalization to systems of the inverse conductivity problem introduced by Calder\'on \cite{calderon}. In the case where $M$ is a bounded domain in $\mR^3$ and $g$ is the Euclidean metric, it was proved in \cite{OPS} that measurements on the full boundary ($\Gamma_1 = \Gamma_2 = \partial M$) determine $\varepsilon$ and $\mu$ uniquely. Earlier results include \cite{SIC, SuU, CP}, and a simplified proof was presented in \cite{OS}. Stability results for this inverse problem are in \cite{Caro_stability1, Caro_stability2}, boundary determination results are in \cite{McD1, JM}, the case of chiral media is considered in \cite{McD2}, and a recent uniqueness result for $C^1$ coefficients is given in \cite{CZ}.

The inverse problem for Maxwell equations on manifolds was discussed in \cite{OPS_insideout}. The first uniqueness results in the non-Euclidean case were given in \cite{KSaU}, again for the full data case $\Gamma_1 = \Gamma_2 = \partial M$, when $M$ is an admissible manifold in the following sense.

\begin{defn}
\label{def of admissible}
A compact Riemannian manifold $(M,g)$ with smooth boundary is called \emph{admissible} if $(M,g)$ is embedded in $(T,g)$ where $T = \R \times M_0$ and $g = c (e\oplus g_0)$, where $c$ is a smooth positive function, $(\R,e)$ is the Euclidean line, and $(M_0,g_0)$ is a \emph{simple} manifold (a compact manifold with smooth strictly convex boundary such that the exponential map $\exp_p$ is a diffeomorphism onto $M_0$ for each $p \in M_0$).
\end{defn}

Locally, admissibility implies that in some local coordinates $x = (x_1,x')$ the metric takes the form 
$$
g(x_1,x') = c(x_1,x') \left( \begin{array}{cc} 1 & 0 \\ 0 & g_0(x') \end{array} \right)
$$
where $g_0(x')$ is some positive definite $(n-1)\times(n-1)$ matrix. Conversely, any metric that takes this form in some local coordinates is admissible provided that $g_0$ is simple. Admissible manifolds include compact submanifolds of Euclidean space, hyperbolic space, and $S^3$ minus a point. They also include sufficiently small submanifolds of conformally flat manifolds and warped products. For more information see \cite{DKSaU, afgr}.

In this article we will improve the results of \cite{OPS, KSaU} by considering the partial data problem where $\Gamma_1 = \partial M$ but $\Gamma_2$ is an appropriately chosen open subset of $\partial M$ which is defined by a limiting Carleman weight (LCW) of the manifold $M$. The notion of LCWs was introduced in connection with the Calder\'on problem with partial data in \cite{KSU} in Euclidean space. LCWs were analysed in detail in \cite{DKSaU} also on manifolds. In particular, the class of admissible manifolds emerged in \cite{DKSaU} as a natural class where LCWs exist and one may expect to be able to solve related inverse problems. Note that any admissible manifold has global coordinates $(x_1,x')$ where $x_1$ is the Euclidean direction. It was shown in \cite{DKSaU} that $\phi(x) = x_1$ is a natural LCW in this setting, and in this paper we will always assume that the LCW is given by $\phi(x) = x_1$.


We define the ``front face" of the boundary with respect to the LCW $\phi$ by
\[F_\phi := \{p\in \partial M \mid \langle d\phi(p), \nu\rangle \geq 0\}.\]
Here $\nu$ is the $1$-form corresponding to the unit outer normal of $\partial \Omega$. Our main result is the following theorem.

\begin{thm}
\label{main theorem}
Let $(M,g)$ be an admissible 3-manifold and $\phi(x) = x_1$ a LCW on $M$. Let $\eps_j, \mu_j \in C^3(M)$ be complex functions with positive real part, and let $\omega > 0$. Suppose that the corresponding Cauchy data sets satisfy 
\begin{equation*}
C_1^{\partial M,\tilde{F}} = C_2^{\partial M,\tilde{F}}
\end{equation*}
for some open neighborhood $\tilde{F}$ of $F_\phi$  in $\partial M$. Assume in addition that $\eps_1 = \eps_2$ and $\mu_1 = \mu_2$ to second order on $\partial M$. Then $\eps_1 \equiv \eps_2$ and $\mu_1 \equiv \mu_2$ in $M$.
\end{thm}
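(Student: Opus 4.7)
The plan is to adapt the Carleman estimate technique of Bukhgeim--Uhlmann and Kenig--Sj\"ostrand--Uhlmann, as extended to admissible manifolds by Kenig--Salo--Uhlmann, to the vector-valued Maxwell system. The strategy has four ingredients: (i) reduce \eqref{maxwell equations} to an augmented second-order elliptic system to which Carleman/CGO machinery applies; (ii) prove a Carleman estimate with Bukhgeim--Uhlmann type boundary terms that allows one to discard the portion of $\pd M$ lying outside $\tilde{F}$; (iii) construct complex geometric optics (CGO) solutions associated to the LCW $\ph(x)=x_1$ and carried by geodesics of $(M_0,g_0)$; (iv) pass to the limit in the resulting integral identity to reduce the problem to the inversion of an attenuated geodesic ray transform on the simple manifold $(M_0,g_0)$.

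First, I would introduce auxiliary scalar fields $\Phi,\Psi$ and, using a Colton--P\"aiv\"arinta--Ola--Somersalo style substitution, rewrite the Maxwell system as an augmented equation $(\mathcal{P}+W_{\eps,\mu})X=0$ for the mixed-degree object $X=(\Phi,E,H,\Psi)$. Here $\mathcal{P}$ is a Hodge--Dirac operator, whose square is a Hodge--Laplace-type operator acting diagonally on the components of $X$, and $W_{\eps,\mu}$ is a matrix-valued zeroth-order term built from $\eps,\mu$ and their logarithmic derivatives. Under a natural gauge, solutions of the augmented system correspond exactly to Maxwell solutions in the sense that $\Phi\equiv\Psi\equiv 0$, which one uses both to import the tangential Cauchy data and to recover $\eps,\mu$ at the end.

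Next, conjugating $\mathcal{P}$ by $e^{\ph/h}$ I would establish, on the reduced operator, a semiclassical Carleman estimate of the form
\[
h\norm{u}_{L^2}^2 + h^2\int_{\pd M\setminus\tilde{F}} |\langle d\ph,\n\rangle|\,|\nd u|^2\,dS \le C\norm{e^{\ph/h}\mathcal{P}(e^{-\ph/h}u)}_{L^2}^2 + Ch^2\int_{\tilde{F}} |\nd u|^2\,dS,
\]
together with its dual version for the transpose operator. This estimate has two uses: it produces CGOs with prescribed asymptotic profile, and, combined with the hypothesis $C_1^{\pd M,\tilde{F}}=C_2^{\pd M,\tilde{F}}$, it yields the integral identity
\[
\int_M \langle (W_{\eps_1,\mu_1}-W_{\eps_2,\mu_2})X_1,X_2\rangle\,dV_g = 0
\]
for suitable CGO pairs $(X_1,X_2)$, since the missing Neumann-type boundary contribution on $\pd M\setminus\tilde{F}$ arising from Green's identity is absorbed in the boundary term of the Carleman estimate and vanishes as $h\to 0$. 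Substituting CGOs of the form $X=e^{-(\ph+i\psi)/h}(a+hr)$, where $\psi$ is harmonic conjugate to $\ph$ in the transverse direction and $a$ is a Gaussian beam concentrated on a geodesic of $(M_0,g_0)$, converts the identity in the limit $h\to 0$ into the vanishing of an attenuated geodesic ray transform of a combination of $\log(\eps_1/\eps_2)$ and $\log(\mu_1/\mu_2)$; invertibility of this transform on the simple manifold $(M_0,g_0)$, together with the second-order boundary-matching assumption, then forces $\eps_1\equiv\eps_2$ and $\mu_1\equiv\mu_2$.

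The main obstacle is the \emph{vector} nature of Maxwell's equations: the principal symbol of the augmented operator is not a scalar multiple of the identity, so the scalar Carleman argument cannot be copied componentwise, and the leading amplitudes $a$ of the CGOs must be aligned with an appropriate eigenspace of a Dirac-type transport operator in order for the integral identity to produce a non-degenerate ray transform. A secondary difficulty is the handling of boundary terms: the tangential traces of the 1-forms $E,H$ impose mixed boundary conditions on $X$, and one has to check that the boundary Carleman estimate is compatible with those conditions so that the integration by parts leading to the integral identity is clean. Finally, because two complex functions $\eps,\mu$ must be recovered simultaneously, one needs to exploit enough freedom in choosing the CGO amplitudes (and pair them appropriately between the two solution families) to separately recover each coefficient from the limiting ray-transform identity.
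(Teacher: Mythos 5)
Your overall architecture (augmented Hodge--Dirac system, CGO solutions adapted to the LCW $\ph(x)=x_1$, reduction to an attenuated geodesic ray transform on the simple transversal manifold $(M_0,g_0)$) matches the paper. The genuine gap is in your step (ii): you propose to prove a Carleman estimate with Bukhgeim--Uhlmann type boundary terms \emph{directly for the conjugated first-order operator} $e^{\ph/h}\mathcal{P}e^{-\ph/h}$, and to use it to absorb the unknown boundary contribution on $\pd M\setminus\tilde F$ arising from Green's identity. This is exactly the step the paper identifies as problematic and deliberately avoids. Boundary Carleman estimates of this form for first-order elliptic systems are not available off the shelf (and your proposed inequality, with an $h^2\abs{\nd u}^2$ boundary weight against the $L^2$ norm of a first-order operator, does not even scale consistently). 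More seriously, the quantity you would need to control on the inaccessible set is not a Neumann trace of a scalar solution of a second-order equation with vanishing Dirichlet data: the Maxwell Cauchy data give $tE$ on all of $\pd M$ but $tH$ only on $\tilde F$, which for the graded form $Y_1-\tilde Y_2$ translates into control of $t(Y_1-\tilde Y_2)$ everywhere but of $t*(Y_1-\tilde Y_2)$ only on $\tilde F$; there is no evident first-order Carleman estimate applicable to $Y_1-\tilde Y_2$ under exactly these mixed conditions. The paper circumvents this entirely: it proves the integral identity $((Q_1-Q_2)Z_1\,|\,Y_2)=0$ with \emph{no residual boundary term to estimate}, by constructing the second CGO solution $Y_2=(P-\bar W_2)Z_2$ so that $tY_2$ vanishes identically on a neighborhood $\Gamma$ of $\pd M\setminus\tilde F$. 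The Carleman estimate does enter there, but it is the estimate for the \emph{second-order} Hodge Laplacian with relative-type boundary conditions (Proposition \ref{carleman estimates}, from \cite{CST}), used through a Hahn--Banach duality argument to solve an inhomogeneous boundary value problem for $-\Delta+\hat Q_2$; the boundary terms in the integral identity then cancel exactly because the supports of the traces are complementary.

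Two smaller points. First, the limiting identity should be phrased in terms of the Schr\"odinger potentials $Q_j$ rather than the Dirac potentials $W_j$: the paper passes from $((W_1-W_2)Y_1|Y_2)=0$ to $((Q_1-Q_2)Z_1|Y_2)=0$, and it is the distinguished diagonal entries of $Q_1-Q_2$ (containing $\Delta\alpha$, $\langle d\alpha,d\alpha\rangle$ and $\omega^2(\eps_1\mu_1-\eps_2\mu_2)$) whose attenuated ray transforms vanish for the two amplitude choices $(s_0,t_0)=(1,0)$ and $(0,1)$; pairing the amplitudes against $W_1-W_2$ directly would leave first-order terms that do not decouple. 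Second, the endgame is more than ``injectivity of the ray transform plus boundary matching'': after concluding $q_\alpha\equiv q_\beta\equiv 0$ one still faces a coupled semilinear elliptic system for $u=(\eps_1/\eps_2)^{1/2}$ and $v=(\mu_1/\mu_2)^{1/2}$, and one must invoke unique continuation for that system together with $u=v=1$ and $\nd u=\nd v=0$ on $\pd M$ to conclude that $\eps_1\equiv\eps_2$ and $\mu_1\equiv\mu_2$.
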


We mention a particular case of the above theorem in $\R^3$ ($\text{ch}(\overline{\Omega})$ denotes convex hull).

\begin{thm}
\label{main theorem2}
Let $\Omega \subset \mR^3$ be a bounded domain with $C^{\infty}$ boundary, let $\eps_j, \mu_j \in C^3(\overline{\Omega})$ be complex functions with positive real part, and let $\omega > 0$. Given $x_0 \in \mR^3 \setminus \text{ch}(\overline{\Omega})$, define 
\begin{equation*}
F(x_0) = \{ x \in \partial \Omega \,;\, (x-x_0) \cdot \nu(x) \leq 0 \}.
\end{equation*}
Suppose that the corresponding Cauchy data sets satisfy 
\begin{equation*}
C_1^{\partial \Omega,\tilde{F}} = C_2^{\partial \Omega,\tilde{F}}
\end{equation*}
for some open neighborhood $\tilde{F}$ of $F(x_0)$  in $\partial \Omega$. Assume in addition that $\eps_1 = \eps_2$ and $\mu_1 = \mu_2$ to second order on $\partial \Omega$. Then $\eps_1 \equiv \eps_2$ and $\mu_1 \equiv \mu_2$ in $\Omega$.
\end{thm}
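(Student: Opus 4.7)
The plan is to deduce Theorem \ref{main theorem2} directly from Theorem \ref{main theorem} by exhibiting $(\overline{\Omega}, g_{\mathrm{eucl}})$ as an admissible $3$-manifold on which the LCW $\phi(x) = x_1$, read back in Euclidean coordinates as $-\log|p-x_0|$, has front face equal to $F(x_0)$.

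First I would pass to logarithmic polar coordinates centered at $x_0$. Writing $p = x_0 + r\theta$ with $r > 0$ and $\theta \in S^2$, the Euclidean metric becomes $dr^2 + r^2 g_{S^2}$, and the further substitution $x_1 = -\log r$ turns it into
\[
g_{\mathrm{eucl}} = e^{-2x_1}\bigl(dx_1^2 + g_{S^2}\bigr),
\]
which is of the form $c(e \oplus g_0)$ with $c = e^{-2x_1}$ and $g_0 = g_{S^2}$. Because $x_0 \notin \mathrm{ch}(\overline{\Omega})$, a separating hyperplane argument shows that the set of directions $(p-x_0)/|p-x_0|$ for $p \in \overline{\Omega}$ is contained in an open spherical cap strictly smaller than a hemisphere, and such a cap lies in the interior of some compact simple $2$-submanifold $M_0 \subset S^2$ (for instance a slightly larger geodesic disc of radius less than $\pi/2$). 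The map $p \mapsto (-\log|p-x_0|,\,(p-x_0)/|p-x_0|)$ then embeds $\overline{\Omega}$ isometrically into the admissible manifold $(\R \times M_0,\, c(e\oplus g_0))$, so $(\overline{\Omega}, g_{\mathrm{eucl}})$ is itself admissible in the sense of Definition \ref{def of admissible}, with $\phi(x) = x_1$ a valid LCW.

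Next I would identify the front face. In Euclidean coordinates $\phi(p) = -\log|p - x_0|$, so $\nabla \phi(p) = -(p - x_0)/|p - x_0|^2$, and for $p \in \partial\Omega$
\[
\langle d\phi(p), \nu \rangle = -\frac{(p - x_0)\cdot \nu(p)}{|p - x_0|^2},
\]
which is non-negative precisely when $(p - x_0)\cdot \nu(p) \leq 0$. Hence $F_\phi = F(x_0)$, and the neighborhood $\tilde F$ of $F(x_0)$ given by hypothesis is a neighborhood of $F_\phi$. Since the Maxwell system \eqref{maxwell equations} is written in coordinate-free form in terms of $d$ and the Hodge star of the same metric, the Cauchy data sets $C_j^{\partial\Omega, \tilde F}$ transported through the change of variables agree with those appearing in Theorem \ref{main theorem}. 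The assumption that $\eps_1 = \eps_2$ and $\mu_1 = \mu_2$ to second order on $\partial\Omega$ carries over unchanged, so applying Theorem \ref{main theorem} yields $\eps_1 \equiv \eps_2$ and $\mu_1 \equiv \mu_2$ on $\Omega$.

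The step I expect to require the most care is the construction of the simple $2$-manifold $M_0 \subset S^2$ containing the angular projection of $\overline{\Omega}$ from $x_0$: this is precisely where the hypothesis $x_0 \notin \mathrm{ch}(\overline{\Omega})$ is used in an essential way, since otherwise the angular image could fill all of $S^2$ and no simple $M_0$ would suffice. Everything beyond that is bookkeeping of a conformal change of variables.
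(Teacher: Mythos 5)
Your proposal is correct and follows essentially the same route as the paper: pass to logarithmic polar coordinates centered at $x_0$, use the separating hyperplane from $x_0 \notin \mathrm{ch}(\overline{\Omega})$ to place the angular image inside a simple spherical cap, and invoke Theorem \ref{main theorem}. Your choice $\phi = -\log|p-x_0|$ is in fact the sign that makes $F_\phi$ literally equal to $F(x_0)$ under the paper's convention $F_\phi = \{\langle d\phi,\nu\rangle \geq 0\}$, so the sign bookkeeping is handled more carefully than in the paper's own two-line argument.
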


This result involves the logarithmic Carleman weight in $\mR^3$ and corresponds to one of the results of \cite{KSU} for the partial data problem for the Schr\"odinger equation. Note that if $\Omega$ is strictly convex, then any open subset of $\partial \Omega$ can serve as $\tilde{F}$ above.

A standard method to study such partial data problems is to apply the idea developed in \cite{BU, KSU} for scalar equations, where Carleman estimates with suitable positive boundary terms were used to suppress the unknown information. The partial data inverse problem for Maxwell equations presents several challenges when one tries to apply these methods directly. The first difference is the fact that the principal part of this equation is a first order system rather than a second order scalar equation, and Carleman estimates with boundary terms for systems seem to be more involved than those for scalar equations (see \cite{Eller, ST} for some such estimates). Secondly, the fact that ellipticity is somewhat ''hidden'' in the system makes it difficult to obtain a estimate similar to the one used in \cite{KSU} which could be directly applied to the Maxwell system.

We circumvent these difficulties by first using ideas from \cite{OS, OPS_insideout, KSaU} to show that the problem reduces to constructing suitable solutions to a Hodge Dirac system 
\[(P + W) u = 0,\ \ \  tu|_{\partial M \setminus \tilde F} = 0.\]
We will then apply the recent work \cite{CST} where the authors studied Carleman estimates and complex geometrical optics solutions for the Hodge Laplace operator with relative and absolute boundary conditions. The crucial point is that the Carleman estimate with boundary terms proved in \cite{CST} for the Hodge Laplacian is sufficiently powerful to yield information also in the Maxwell case. However, we stress that the present situation for Maxwell equations does not reduce to that of \cite{CST}, since the boundary measurements for Maxwell determine in a sense only part of the boundary measurements for the Hodge system. It is essential to use the special structure of the Maxwell system to show that the coefficients are uniquely determined.

The main contribution of this work is to show that the Carleman estimate approach to scalar partial data inverse problems introduced in \cite{BU, KSU} can be carried over to the Maxwell system. A Carleman estimate approach for a different first order system, related to the Pauli Dirac operator, was presented in  \cite{ST} where the method involved decoupling the Pauli Dirac operator into a second order differential operator with the Laplacian as its principal part. A suitable Carleman estimate was then applied to the decoupled equation to recover the coefficients. The partial data problem for Maxwell equations was also studied in \cite{COS} in the case when $M$ is Euclidean and the inaccessible part of the boundary is a portion of a hyperplane or a  sphere, following the scalar approach of \cite{I}. Due to the partial symmetry of the domain one can reflect across the flat part of the boundary and reduce the problem to a full data problem. Partial data results are also known for a two-dimensional Maxwell system \cite{IY_maxwell_2D} and for Maxwell equations in a waveguide \cite{IY_maxwell_waveguide}, based on the two-dimensional partial data results of \cite{IUY}, and in the case where the parameters are known near the boundary \cite{BMR}. Recently, extensions of the Carleman estimate and reflection approaches for partial data problems were introduced in \cite{KS} and \cite{IY_3D}; some of these extensions for the Maxwell system were considered in \cite{IY_maxwell_cylindrical}. We also remark that the methods in the recent paper \cite{DKLS} might allow to relax to some extent the admissibility assumption in Theorem \ref{main theorem}.

\subsection*{Acknowledgements}

F.C., P.O.\ and M.S.\ were partly supported by the Academy of Finland (Centre of Excellence in Inverse Problems Research), and F.C.\ and M.S.\ were supported by an ERC Starting Grant (grant agreement no 307023). L.T.\ was partly supported by the Academy of Finland (decision no 271929), Vetenskapsr\aa det (decision no 2012-3782), and Australian Research Council Future Fellowship (fellowship no G161397). F.C.\ would like to acknowledge the University of Jyv\"{a}skyl\"{a} for its hospitality on subsequent visits.

\section{Reduction to the Hodge Laplacian}\label{sec:sec_reductions}
In this section we employ ideas from \cite{OS, OPS_insideout, KSaU} to show that the problem reduces to constructing suitable solutions of the Hodge Dirac and Schr\"odinger operator. The notation follows closely \cite{KSaU} and we refer to that article for more details. If $E$ and $H$ are complex $1$-forms in $M$, we consider the graded form $X = \Phi + E + *H + *\Psi$ where $\Phi$ and $\Psi$ are complex scalar functions. We write graded forms in the vector notation 
\begin{equation*}
X = \left( \begin{array}{cc|cc} \Phi & *H & *\Psi & E \end{array} \right)^t.
\end{equation*}
The line denotes that the forms of even order and odd order are grouped together, which will result in a block structure for the equation. We define the following Hodge Dirac operator acting on graded forms in $M$, 
\begin{equation*}
P = \frac{1}{i} (d-\delta) = \frac{1}{i} \left( \begin{array}{cc|cc} & &  & -\delta \\ & & -\delta & d \\ \hline  & d & & \\ d & -\delta & & \end{array} \right).
\end{equation*}
The Dirac equation which is most closely related to Maxwell is 
\begin{equation*}
(P + V)X = 0
\end{equation*}
where $V$ is the operator acting on graded forms as the matrix operator 
\begin{equation*}
V = \left( \begin{array}{cc|cc} -\omega \mu & & & *D\alpha \wedge * \\ & -\omega \mu & *D\alpha \wedge * & \\ \hline & D\beta \wedge & -\omega \eps & \\ D\beta \wedge & & & -\omega \eps \end{array} \right).
\end{equation*}
and where $D = \frac{1}{i} d$, $\alpha = \log \,\eps$, and $\beta = \log\,\mu$.  One easily sees that 
\begin{equation}
\left\{ \begin{array}{rl}
(P+V) X =0,\ \ \ \ \ \ \ \ \ \ \ \ \ \   \\
X = \left( \begin{array}{cc|cc} 0 & *H & *0 & E \end{array} \right)^t
\end{array} \right.\Leftrightarrow\left\{ \begin{array}{rl}
*dE &\!\!\!= i\omega \mu H, \\
*dH &\!\!\!= -i\omega \eps E.
\end{array} \right.
\end{equation}
The advantage here is that $P$ is an elliptic operator.

For the reduction to the Schr\"odinger equation with Hodge Laplacian, we consider rescaled solutions 
\begin{equation*}
Y = \left( \begin{array}{c|c} \mu^{1/2} & \\ \hline & \eps^{1/2} \end{array} \right) X,
\end{equation*}
and henceforth relate $X$ and $Y$ in this way. We will write 
\[
Y = \left( \begin{array}{cc|cc} Y^0 & Y^2 & Y^3 & Y^1 \end{array} \right)^t
\]
where $Y^k$ is the $k$-form part of the graded form $Y$. It is easy to see that 
\begin{equation*}
(P+V)X = 0 \quad \Longleftrightarrow \quad (P+W)Y = 0
\end{equation*}
where $W$ is the potential 
\begin{equation*}
W = -\kappa + \frac{1}{2} \left( \begin{array}{cc|cc} & & & *D\alpha \wedge * \\ & & *D\alpha \wedge * & -D\alpha \wedge \\ \hline & D\beta \wedge & & \\ D\beta \wedge & *D\beta \wedge * & & \end{array} \right),
\end{equation*}
and $\kappa = \omega (\eps \mu)^{1/2}$.

The reason for using the rescaled solutions $Y$ is that there is a good reduction of the Dirac equation $(P+W)Y = 0$ to a Schr\"odinger equation with no first order terms (see \cite[Lemma 3.1]{KSaU}).

\begin{lemma} \label{lemma:reduction_schrodinger}
Denote by $-\Delta = d\delta + \delta d$ the Hodge Laplacian acting on graded forms. We have 
\begin{align*}
(P+W)(P-W^t) &= -\Delta + Q, \\
(P-W^t)(P+W) &= -\Delta + Q', \\
(P+W^*)(P-\bar{W}) &= -\Delta + \hat{Q},
\end{align*}
where $Q$, $Q'$, and $\hat{Q}$ are continuous potentials (endomorphisms of $\Lambda M$) with 
\begin{align*}
Q &= - \kappa^2 + \frac{1}{2} \left( \begin{array}{cc|cc} \Delta \alpha + \frac{1}{2} \langle d\alpha,d\alpha \rangle & 0 & \bullet & \bullet \\ 0 & \bullet & \bullet & \bullet \\ \hline \bullet & \bullet & \Delta \beta + \frac{1}{2} \langle d\beta,d\beta \rangle & 0 \\ \bullet & \bullet & 0 & \bullet \end{array} \right), \\
Q' &= - \kappa^2 - \frac{1}{2} \left( \begin{array}{cc|cc} \Delta \beta - \frac{1}{2} \langle d\beta,d\beta \rangle & 0 & 0 & 0 \\ \bullet & \bullet & \bullet & \bullet \\ \hline 0 & 0 & \Delta \alpha - \frac{1}{2} \langle d\alpha,d\alpha \rangle & 0 \\ \bullet & \bullet & \bullet & \bullet \end{array} \right),
\end{align*}
and $\bullet$ denote bounded coefficients involving derivatives of $\eps$ and $\mu$ up to second order. 
\end{lemma}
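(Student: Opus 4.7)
The proof hinges on the identity $P^2 = -\Delta$, which follows immediately from $d^2 = \delta^2 = 0$ and the definition $P = \tfrac{1}{i}(d-\delta)$: indeed $P^2 = -(d-\delta)^2 = d\delta + \delta d = -\Delta$ on graded forms. Expanding the three products then gives
\begin{equation*}
(P+W)(P-W^t) = -\Delta + (WP - PW^t) - WW^t,
\end{equation*}
and analogously $(P-W^t)(P+W) = -\Delta + (PW - W^tP) - W^tW$ as well as $(P+W^*)(P-\bar W) = -\Delta + (W^*P - P\bar W) - W^*\bar W$. The task therefore reduces to showing that each bracketed commutator-like expression is in fact a zeroth-order matrix operator on $\Lambda M$, and then to identifying the resulting block structure.

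The central step is the first-order cancellation. Decomposing $W = -\kappa I + W_0$ with $W_0$ the off-diagonal piece built from $\tfrac{1}{2}D\alpha\wedge$, $\tfrac{1}{2}{*}D\alpha\wedge {*}$, $\tfrac{1}{2}D\beta\wedge$ and $\tfrac{1}{2}{*}D\beta\wedge {*}$, one commutes $P$ through $W$ and $W^t$ using the Leibniz rules
\begin{equation*}
d(f\omega) = df \wedge \omega + f\,d\omega, \qquad \delta(f\omega) = f\,\delta\omega - \iota_{(df)^\sharp}\omega.
\end{equation*}
The key algebraic fact is that on forms of fixed degree in a $3$-manifold $(d\alpha\wedge)^t = \iota_{(d\alpha)^\sharp} = \pm {*}d\alpha\wedge {*}$, so transposition interchanges each $D\alpha\wedge$ entry of $W$ with its ${*}D\alpha\wedge{*}$ partner in the conjugate block position (and similarly for $\beta$). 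It is precisely this pairing that forces the a priori first-order contributions of $WP$ and $PW^t$ to cancel against each other, leaving only terms that are wedges or interior products with $d\alpha$, $d\beta$, and $d\kappa$. The remainder $Q := (WP - PW^t) - WW^t$ is therefore a bounded endomorphism of $\Lambda M$.

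To extract the displayed block structure, one then collects entries. The diagonal $-\kappa$ contributes the $-\kappa^2$ common to every block via $-WW^t$, while its commutator with $P$ produces wedges and contractions with $d\kappa$ that sit in the $\bullet$ entries. On the $0$-form diagonal block, composing $\tfrac{1}{2}D\alpha\wedge$ with its transpose $\tfrac{1}{2}{*}D\alpha\wedge{*}$, mediated by the differential part of $P$, evaluates to $\tfrac{1}{2}\Delta\alpha$ on scalars; the quadratic term $-W_0W_0^t$ then contributes $\tfrac{1}{4}\langle d\alpha,d\alpha\rangle$, giving the claimed $\tfrac{1}{2}(\Delta\alpha + \tfrac{1}{2}\langle d\alpha, d\alpha\rangle)$. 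The $2$-form diagonal block is treated identically with $\beta$ in place of $\alpha$. All other entries, involving cross terms between $\kappa$, $d\alpha$, $d\beta$ and their derivatives, are bounded since $\eps, \mu \in C^3(M)$, and are recorded in the $\bullet$ boxes. The identities for $Q'$ and $\hat Q$ follow by parallel computations with the factor order reversed (which interchanges the roles of $\alpha$ and $\beta$ in the distinguished diagonal entries) and with $\bar W$ in place of $W^t$, respectively. The chief obstacle is the careful $4\times 4$ block bookkeeping needed to verify the first-order cancellation and pin down the precise signs in the two distinguished diagonal entries; this mirrors the corresponding calculation in [KSaU, Lemma 3.1], and justifies the particular choice of rescaling $Y = \mathrm{diag}(\mu^{1/2}, \eps^{1/2})X$ and of the coefficients $\alpha = \log\eps$, $\beta = \log\mu$.
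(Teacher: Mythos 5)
Your proposal is correct and follows the same route as the paper, which in fact gives no independent proof of this lemma but quotes it directly from Lemma 3.1 of \cite{KSaU}: the identity $P^2 = -\Delta$, the observation that transposition exchanges the $D\alpha\wedge$ and $*D\alpha\wedge*$ entries (wedge versus contraction) so that the first-order parts of $WP$ and $PW^t$ cancel, and the block-by-block evaluation of the two distinguished diagonal entries are exactly the computation carried out there. The one thing to be careful about, which your sketch correctly flags but does not execute, is that $W^t$ must be understood as the matrix transpose with each entry replaced by its pointwise adjoint (otherwise entries such as $(W^t)_{41} = *D\alpha\wedge*$ would annihilate $0$-forms and the cancellation would fail); with that convention the bookkeeping goes through as you describe.
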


\begin{remarks}
The coefficient functions denoted simply by a ``bullet" above are not needed in our argument. They are easy to compute though if one so wishes. Also
observe that due to this lemma, if $Z$ is a solution of $(-\Delta + \hat Q) Z =0$, then $Y := (P-\bar W)Z$ would solve $(P+W^*) Y = 0$. Finally, the precise form of \(\hat{Q}\) is not needed in this work, so we will not bother the reader by writing it out explicitly.
\end{remarks}


The above arguments can be used to reduce the Maxwell equations to the Hodge Dirac and Schr\"odinger equations. We now want to give a similar reduction on the level of boundary measurements. In fact, we will prove an integral identity showing - see Lemma \ref{integral identity with boundary conditions} -  that if the partial Cauchy data for two sets of coefficients coincide, then the difference of the corresponding potentials $Q_j$ is orthogonal to a product of solutions to the Schr\"odinger and Dirac equations. This result is a version of \cite[Lemma 3.2]{KSaU} adapted to the partial data problem. 

If $U$ and $V$ are graded forms on $M$, we use the notations 
\begin{equation*}
(U|V) = \int_{M} \sum_{j=0}^3 \langle U^j, \overline{V}^j \rangle \,dV, \quad (U|V)_{\partial M} = \int_{\partial M} \sum_{j=0}^3 \langle U^j, \overline{V}^j \rangle \,dS.
\end{equation*}
for the \(L^2\)--inner products. We also use the same notation in the case when \(U\) and \(V\) are in appropriate dual Sobolev spaces.
Note that in the second expression, $U$ and $V$ do not have be graded forms of the manifold $\partial M$ (for instance, the $1$-form parts may have a normal component). We have the integration by parts formulas (\cite[Section 2.10]{T1})
\begin{align}
 & (dU|V) = (\nu \wedge U|V)_{\partial M} + (U|\delta V), \label{iparts1} \\
 & (\delta U|V) = -(i_{\nu} U|V)_{\partial M} + (U|dV) \label{iparts2}
\end{align}
where $\nu$ is the outward normal and $i_{\nu}$ is the contraction with $\nu$. Also, we define 
\[
H_\Delta = \{u\in L^2 (M, \Lambda M);\, \Delta u \in L^2 (M, \Lambda M)\}
\]
and equip this with the natural graph norm 
\[
\| u\| _{H_\Delta} ^2  = \|u\|^2 + \|\Delta u\|^2.
\]
Here and in the rest of this paper, notation of the form $\| \cdot \|$ and $\| \cdot \|_{M}$ should be taken to indicate the relevant $L^2$ norm unless otherwise specified.

One feature of the method in this paper is that we need to deal with pairs of solutions, where one solution can be very nonsmooth (only $H^{-1}$ in general). For this reason we will need the following result concerning smooth approximation and traces in the $H_{\Delta}$ space. The proof is given in the appendix.

\begin{lemma} \label{lemma_hdelta_properties}
The set $C^{\infty}(M, \Lambda M)$ is dense in $H_{\Delta}$. The trace maps $u \mapsto (tu, t\delta u)$, $u \mapsto (t*u, t\delta *u)$, $u \mapsto (tu, tdu)$ and $u \mapsto (t*u, td*u)$, initially defined on $C^{\infty}(M, \Lambda M)$, have unique bounded extensions as maps 
\[
H_{\Delta} \to H^{-1/2}(\partial M, \Lambda \partial M) \times H^{-3/2}(\partial M, \Lambda \partial M).
\]
\end{lemma}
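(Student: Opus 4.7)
My plan is to establish the density assertion first, then derive the boundedness of the four trace maps from a Green's identity together with density.

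For density, I would use a partition of unity subordinate to a finite cover of $M$ by coordinate charts. In interior charts, mollification $u \mapsto J_\eps u$ produces smooth approximants, and the Friedrichs commutator lemma ensures $\Delta(J_\eps u) = J_\eps(\Delta u) + [\Delta, J_\eps] u \to \Delta u$ in $L^2$. In charts meeting $\partial M$, I would work in boundary normal coordinates $(x_1, x')$ with $M \subset \{x_1 \geq 0\}$ and $\partial M = \{x_1 = 0\}$, and approximate $u$ first by an inward translate $u_\tau(x_1, x') := u(x_1 + \tau, x')$ for small $\tau > 0$, then mollify the translate with parameter $\eps < \tau$. The resulting forms lie in $C^\infty(M, \Lambda M)$; sending $\eps \to 0$ first at fixed $\tau$ (Friedrichs-type commutator estimates for the variable-coefficient $\Delta$) and then $\tau \to 0$ (continuity of translation in $L^2$) yields convergence in $H_\Delta$.

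For the trace maps, the backbone is the Green's identity obtained by applying \eqref{iparts1}--\eqref{iparts2} to $-\Delta = d\delta + \delta d$. For $u, \Phi \in C^\infty(M, \Lambda M)$, this reads
\[
(\Delta u | \Phi) - (u | \Delta \Phi) = -(\nu \wedge \delta u | \Phi)_{\partial M} + (i_\nu u | \delta \Phi)_{\partial M} + (i_\nu du | \Phi)_{\partial M} - (\nu \wedge u | d\Phi)_{\partial M}.
\]
Regrouping each boundary pairing into its tangential and normal parts, the right-hand side becomes a sum of pairings of the traces $tu$ and $t\delta u$ (and their $*$-duals, the tangential traces of $*u$ and $*\delta u$) against the boundary data of $\Phi$. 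Given $\phi \in H^{3/2}(\partial M, \Lambda \partial M)$, I would construct an $H^2$-lift $\Phi \in H^2(M, \Lambda M)$ with $t\Phi = \phi$ and the remaining tangential boundary data of $\Phi$ vanishing, satisfying $\|\Phi\|_{H^2(M)} \leq C \|\phi\|_{H^{3/2}(\partial M)}$ (a standard lifting in the theory of elliptic boundary value problems). The identity above then yields
\[
|(t\delta u | \phi)_{\partial M}| \leq C(\|u\| + \|\Delta u\|)\|\phi\|_{H^{3/2}(\partial M)},
\]
so $t\delta u \in H^{-3/2}(\partial M, \Lambda \partial M)$ with the stated bound. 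The analogous lifting with $t\delta\Phi \in H^{1/2}$ prescribed and other tangential data vanishing extracts $tu \in H^{-1/2}(\partial M, \Lambda \partial M)$; uniqueness of the extension follows from the density assertion.

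The three remaining trace maps reduce to this one. Since $\Delta$ commutes with the Hodge star $*$ and $*$ is an $L^2$-isometry on $\Lambda M$, it maps $H_\Delta$ isometrically to itself, and $u \mapsto (t*u, t\delta *u)$ is the first trace map applied to $*u$. The identity $tdu = d_\partial(tu)$, valid for smooth $u$ because $t = i^*$ commutes with $d$, extends by continuity, and since $d_\partial$ is first-order and bounded $H^{-1/2}(\partial M) \to H^{-3/2}(\partial M)$, the map $u \mapsto (tu, tdu)$ is bounded whenever $u \mapsto tu$ is; the fourth map is analogous via $td*u = d_\partial(t*u)$. The main obstacle is the density step: handling the variable coefficients of $\Delta$ near $\partial M$ together with translation and mollification is technically the most delicate part, and requires a careful Friedrichs-type commutator analysis for the translated form.
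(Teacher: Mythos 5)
Your treatment of the four trace maps is essentially the paper's: the Green's identity \eqref{HodgeIbyPt}, an $H^2(M,\Lambda M)$ lifting of prescribed boundary data (this is the role of Lemma \ref{LemmaofRequirement}), a duality bound against $\|u\|_{H_\Delta}$, the Hodge star to pass to the absolute traces, and the identity $tdu = d_{\partial M}(tu)$ together with boundedness of $d_{\partial M}: H^{-1/2} \to H^{-3/2}$ for the remaining two maps. That part is sound.

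The genuine gap is in the density step. Translation plus mollification works for graph spaces of \emph{first-order} operators ($H(\mathrm{div})$, $H(\mathrm{curl})$, Dirac-type spaces) because Friedrichs' lemma controls commutators $[a,J_\eps]\partial_j$ acting on $L^2$ functions, i.e.\ the commutator with a mollifier gains exactly one derivative. Here $\Delta$ is second order with variable coefficients, and the error terms you must control, namely $[\Delta,J_\eps]u$ and $(\Delta-\Delta^\tau)u_\tau$ (where $\Delta^\tau$ denotes the operator with translated coefficients), contain contributions of the form $[a,J_\eps]\,\partial_i\partial_j u$ and $O(\tau)\cdot\partial_i\partial_j u_\tau$. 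Membership in $H_\Delta$ controls only the particular combination $\Delta u$ in $L^2$; individual second derivatives are controlled only by interior elliptic regularity, with constants blowing up like $\tau^{-2}$ on $\{x_1>\tau\}$, so the coefficient-translation error $O(\tau)\,\partial^2 u_\tau$ is a priori only $O(\tau^{-1})$ in $L^2$, and the second-order commutator is not closed by Friedrichs' lemma, which gains one derivative rather than two. You flag this as the delicate point but do not supply the missing idea, and since you also rely on density for the uniqueness of the trace extensions, the gap propagates. The paper proceeds in the opposite order and avoids mollification entirely: it first defines the weak traces by duality, then approximates the Cauchy data $(\Delta u, tu, t*u)$ of a given $u\in H_\Delta$ by smooth data, solves the associated Poisson problem for the Hodge Laplacian (Lemma \ref{lemma_poisson_solvability}) to produce a smooth $u'$, and obtains $\|u-u'\|_{L^2}\lesssim \eps$ by pairing $u-u'$ with forms $\Delta w$ where $tw=t*w=0$, using surjectivity of that boundary value problem. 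Replacing your density step with an argument of this type would repair the proof.
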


Given a pair of electromagnetic parameters  \(\{\eps _j, \mu _j\}, \, j = 1,\, 2\), we define the corresponding matrix potentials \(V_j\), \(Q_j\) and \(W_j\) be replacing \(\eps\) and \(\mu\) respectively with \(\eps _j\) and \(\mu _j\) in definitions above. The next result is the main integral identity.

\begin{lemma}
\label{integral identity with boundary conditions}
Let $\eps_j, \mu_j \in C^{3}(M)$, \(j=1,\,2\), be complex functions with positive real parts, and assume that 
\begin{equation} \label{epsmu_boundary}
\eps_1 = \eps_2,\, \, \mu_1 = \mu_2 \,\, \text{to second order on $\partial M$}.
\end{equation}
Let $\Gamma_2$ be any open subset of $\partial M$, and assume that $C_1^{\partial M,\Gamma_2} = C_2^{\partial M,\Gamma_2}$. Then 
\begin{equation*}
((Q_1-Q_2)Z_1|Y_2) = 0
\end{equation*}
for any graded forms $Z_1 \in H^3$ and $Y_2 \in H^{-1}$ such that 
\begin{eqnarray*}
 & (-\Delta+Q_1) Z_1 = 0, & \\
 & Y_1 = (P-W_1^t)Z_1 \mbox{ satisfies } Y_1^0 = Y_1^3 = 0, & 
\end{eqnarray*}
and
\begin{eqnarray*}
 & Y_2 = (P-\bar{W}_2)Z_2 \mbox{ for some } Z_2 \in H_{\Delta} \mbox{ with } (-\Delta+\hat{Q}_2) Z_2 = 0, & \\
 & \mathrm{supp}(t Y_2) \subset \Gamma_2.  &  
\end{eqnarray*}
\end{lemma}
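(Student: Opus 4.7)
The plan is to convert $((Q_1-Q_2)Z_1\,|\,Y_2)$ into a pure boundary pairing, then use the Cauchy data hypothesis to absorb most of it by a matching Maxwell solution for the second parameters, and finally show that the residual boundary term vanishes by combining the support condition on $tY_2$ with the fact that on Maxwell solutions the normal boundary components are already determined by the tangential traces.

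Using the factorization $-\Delta+Q_2 = (P+W_2)(P-W_2^t)$ from Lemma \ref{lemma:reduction_schrodinger} together with $(-\Delta+Q_1)Z_1=0$, I would first write
\[
((Q_1-Q_2)Z_1\,|\,Y_2) = -((P+W_2)U\,|\,Y_2),\qquad U := (P-W_2^t)Z_1.
\]
Since $(-\Delta+\hat Q_2) Z_2=0$ and $-\Delta+\hat Q_2 = (P+W_2^*)(P-\bar W_2)$, we have $(P+W_2^*)Y_2=0$. The Green identity for $P$ obtained from \eqref{iparts1}--\eqref{iparts2} then collapses the right-hand side into the boundary term
\[
B(U,Y_2) := \tfrac{1}{i}\bigl[(\nu\wedge U\,|\,Y_2)_{\partial M} + (i_\nu U\,|\,Y_2)_{\partial M}\bigr].
\]
Because $\eps_1=\eps_2$ and $\mu_1=\mu_2$ on $\partial M$, the matrices $W_1$ and $W_2$ coincide there, so $U$ and $Y_1=(P-W_1^t)Z_1$ have identical boundary traces and $B(U,Y_2)=B(Y_1,Y_2)$.

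Next I would invoke the Cauchy data equality. Since $Y_1^0=Y_1^3=0$, undoing the rescaling $Y=\mathrm{diag}(\mu^{1/2},\eps^{1/2})X$ turns $Y_1$ into a Maxwell solution $(E_1,H_1)$ for $(\eps_1,\mu_1)$, and $C_1^{\partial M,\Gamma_2}=C_2^{\partial M,\Gamma_2}$ provides a Maxwell solution $(E_2,H_2)$ for $(\eps_2,\mu_2)$ with $tE_2=tE_1$ on $\partial M$ and $tH_2=tH_1$ on $\Gamma_2$. Letting $\tilde Y_1$ be the corresponding rescaled graded form, $(P+W_2)\tilde Y_1 = 0$ implies $B(\tilde Y_1,Y_2)=0$ by the same Green identity, so writing $R := Y_1 - \tilde Y_1$ leaves
\[
((Q_1-Q_2)Z_1\,|\,Y_2) = -B(R,Y_2),
\]
and the whole lemma reduces to the identity $B(R,Y_2)=0$.

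This final identity is the main obstacle. By construction $R^0=R^3=0$, $tR^1=0$ on $\partial M$, and $tR^2=0$ on $\Gamma_2$. The crucial additional ingredient is that on any Maxwell solution the normal boundary components are controlled by the tangential ones: contracting $*dE=i\omega\mu H$ and $*dH=-i\omega\eps E$ with $\nu$ and restricting to $\partial M$ shows that $(i_\nu H)|_{\partial M}$ depends only on $tE$ and $\mu|_{\partial M}$, while $(i_\nu E)|_{\partial M}$ depends only on $tH$ and $\eps|_{\partial M}$. Combined with $\eps_1=\eps_2$ and $\mu_1=\mu_2$ on $\partial M$, this upgrades the partial matching to $tR^2=0$ on \emph{all} of $\partial M$, and $i_\nu R^1, i_\nu R^2 = 0$ on $\Gamma_2$. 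Expanding $B(R,Y_2)$ using $R^0=R^3=0$ produces four boundary pairings: the two arising from $\nu\wedge R^k$ ($k=1,2$) vanish pointwise on $\partial M$ because they depend only on $tR^k$; the two arising from $i_\nu R^k$ pair against $Y_2^{k-1}$, whose tangential trace is supported in $\Gamma_2$ by $\supp(tY_2)\subset\Gamma_2$, while $i_\nu R^k=0$ on $\Gamma_2$, so disjoint supports give zero. The remaining technical point — that all these boundary pairings are well-defined when $Y_2\in H^{-1}$ — is handled by the density statement and extended trace maps of Lemma \ref{lemma_hdelta_properties} combined with smooth approximation.
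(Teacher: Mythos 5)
Your argument is correct and follows essentially the same route as the paper: the same factorization of $-\Delta+Q$ through the Hodge Dirac operator, the same matching solution $\tilde Y$ supplied by the Cauchy data hypothesis, the same surface-divergence upgrade of the normal boundary components, the same disjoint-support argument on $\Gamma_2^c$, and the same $H_{\Delta}$-approximation to make sense of the pairings with $Y_2\in H^{-1}$. The only difference is organizational: you integrate by parts once directly on $(Q_1-Q_2)Z_1=-(P+W_2)(P-W_2^t)Z_1$, whereas the paper routes the computation through the intermediate identity $((W_1-W_2)Y_1\,|\,Y_2)=0$ followed by a second integration by parts based on $Q_j=W_j\circ P-P\circ W_j^t-W_jW_j^t$; the two bookkeepings are equivalent and use the boundary hypotheses in the same way.
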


\begin{remark}
By the assumptions for $\eps_j$ and $\mu_j$, we have that $Q_1 - Q_2$ is $C^1$ and vanishes on $\partial M$. Thus $(Q_1-Q_2)Z_1$ is $H^1$ and vanishes on $\partial M$, and the pairing $((Q_1-Q_2)Z_1|Y_2)$ where $Y_2 \in H^{-1}$ makes sense. Note also that since $Z_2 \in H_{\Delta}$, Lemma \ref{lemma_hdelta_properties} guarantees that the boundary value $tY_2$ exists as an element of $H^{-3/2}$. Here, $Z_1$ and $Y_1$ need to be related to a solution for Maxwell, but $Y_2$ only needs to solve a Dirac equation. This simplifies the recovery of coefficients. 
\end{remark}

\begin{proof}[Proof of Lemma \ref{integral identity with boundary conditions}]
By the discussion earlier in this section, if $Y_1$ is as stated, then $Y_1 \in H^2$ solves $(P+W_1) Y_1 = 0$ and gives rise to a $H^2$ solution $X_1 = \left( \begin{array}{cc|cc} 0 & *H_1 & 0 & E_1 \end{array} \right)^t$ of the Maxwell equations $(P+V_1)X_1 = 0$. By the assumption on Cauchy data sets, there is $\tilde{X}_2 = \left( \begin{array}{cc|cc} 0 & *\tilde{H}_2 & 0 & \tilde{E}_2 \end{array} \right)^t \in H^2$ satisfying 
\begin{eqnarray*}
 & (P + V_2) \tilde{X}_2 = 0, & \\
 & t\tilde{E}_2|_{\partial M} = tE_1|_{\partial M}, \quad t\tilde{H}_2|_{\Gamma_2} = tH_1|_{\Gamma_2}. & 
\end{eqnarray*}
Let $\tilde{Y}_2 \in H^2$ be the solution of $(P+ W_2)\tilde{Y}_2 = 0$ corresponding to $\tilde{X}_2$.

We first claim that the assumption on Cauchy data sets implies 
\begin{equation} \label{w_difference_identity}
((W_1 - W_2) Y_1 | Y_2)_M = 0.
\end{equation}
Here, $Y_2$ is only in $H^{-1}$ since $Y_2 = (P-\bar{W}_2) Z_2$ where $Z_2 \in H_{\Delta}$, but the pairing is well defined because the quantity $(W_1-W_2) Y_1$ is in $H^1$ with vanishing boundary values since $\eps_1 = \eps_2$ and $\mu_1 = \mu_2$ to second order on $\partial M$. To deal with the nonsmooth solution $Y_2$, we use Lemma \ref{lemma_hdelta_properties} and choose $Z_l \in C^{\infty}(M,\Lambda M)$ with $Z_l \to Z_2$ in $H_{\Delta}$ as $l \to \infty$, which implies in particular that $(P-\bar{W}_2) Z_l \to Y_2$ in $H^{-1}$. Noting that 
\[
(P+W_2)(Y_1 - \tilde{Y}_2) = (W_2-W_1)Y_1,
\]
we compute 
\begin{align*}
 &((W_2 - W_1)Y_1 | Y_2)_M = \lim_l ((W_2 - W_1)Y_1 | (P-\bar{W}_2) Z_l)_M \\
 &= \lim_l ((P+W_2)(Y_1-\tilde{Y}_2) | (P-\bar{W}_2) Z_l)_M \\
 &= \lim_l \left[ \frac{1}{i} ( (\nu \wedge + i_{\nu})(Y_1-\tilde{Y}_2) | (P-\bar{W}_2) Z_l)_{\partial M} + (Y_1-\tilde{Y}_2 | (P+W_2^*)(P-\bar{W}_2) Z_l)_M \right].
\end{align*}
In the last line we used \eqref{iparts1} and \eqref{iparts2}. For the last term, we have 
\begin{equation} \label{zl_schrodinger_limit}
(P+W_2^*)(P-\bar{W}_2) Z_l = (-\Delta+\hat{Q}_2) Z_l \to (-\Delta+\hat{Q}_2) Z_2 = 0 \text{ in } L^2
\end{equation}
so the second term in brackets goes to zero as $l \to \infty$. For the boundary term, we claim that 
\begin{eqnarray}
t(Y_1 - \tilde{Y}_2) &=& 0 \mbox{ on } \partial M \label{by1} \\
t*(Y_1 - \tilde{Y}_2) &=& 0 \mbox{ on } \Gamma_2  \label{by2} \\
tY_2 &=& 0 \mbox{ on } \Gamma_2^c. \label{by3} 
\end{eqnarray}
Assuming these, one has $\nu \wedge (Y_1 - \tilde{Y}_2)|_{\partial M} = 0$ and $i_{\nu}(Y_1 - \tilde{Y}_2)|_{\Gamma_2} = 0$, and the boundary term becomes 
\[
\lim_l \frac{1}{i} ( Y_1-\tilde{Y}_2 | \nu \wedge (P-\bar{W}_2) Z_l)_{\Gamma_2^c}.
\]
Here $t(P-\bar{W}_2) Z_l|_{\Gamma_2^c} \to tY_2|_{\Gamma_2^c} = 0$ in $H^{-3/2}$ by Lemma \ref{lemma_hdelta_properties}. Since $Y_1-\tilde{Y}_2|_{\partial M}$ is in $H^{3/2}$, the limit of the pairing on $\Gamma_2^c$ is well defined and in fact zero. This proves \eqref{w_difference_identity} modulo \eqref{by1}--\eqref{by3}. To check \eqref{by1}, we see that 
\[
\nu \wedge (Y_1 - \tilde{Y}_2) = (\begin{array}{cc|cc} 0 & \eps_1^{\half} \nu \wedge (E_1 - \tilde{E}_2) &  *\mu_1^{\half} \langle \nu, H_1 - \tilde{H}_2 \rangle & 0 \end{array}).
\]
From the hypotheses, $\nu \wedge (E_1 - \tilde{E}_2)|_{\partial M} = 0$, and using the surface divergence as in the proof of [KSU09, Lemma 3.2], we also have $\langle \nu, H_1 - \tilde{H}_2 \rangle|_{\partial M} = 0$.  Therefore \eqref{by1} is satisfied.  Meanwhile,
\[
\nu \wedge *(Y_1 - \tilde{Y}_2) = (\begin{array}{cc|cc} 0  & \mu_1^{\half} \nu \wedge (H_1 - \tilde{H}_2) & *\eps_1^{\half} \langle \nu, E_1 - \tilde{E}_2 \rangle & 0 \end{array}).
\]
From the hypotheses, $\nu \wedge (H_1 - \tilde{H}_2)|_{\Gamma_2} = 0$, and using the surface divergence as in the proof of [KSU09, Lemma 3.2], we also have $\langle \nu, E_1 - \tilde{E}_2 \rangle|_{\Gamma_2} = 0$.  Therefore \eqref{by2} is satisfied.  The equation \eqref{by3} follows straight from the definition of $Y_2$.  

Having proved \eqref{w_difference_identity}, we next claim that 
\begin{equation} \label{w_q_difference_identity}
((W_1 - W_2) Y_1 | Y_2)_M = ( (Q_1-Q_2)Z_1 | Y_2 )_M.
\end{equation}
Note that the pairing on the right is well defined, since $(Q_1-Q_2) Z_1$ is in $H^1$ with vanishing boundary values. Since $Q_j = W_j \circ P - P \circ W_j^t - W_j W_j^t$, we have
\begin{eqnarray*}
(W_1 - W_2)Y_1 &=& (W_1-W_2)(P-W_1^t)Z_1 \\
               &=& (Q_1 + P \circ W_1^t)Z_1 - (Q_2 + P \circ W_2^t + W_2(W_2^t - W_1^t))Z_1 \\
							 &=& (Q_1 - Q_2)Z_1 + (P +W_2)W_1^t Z_1 - (P + W_2)W_2^t Z_1. \\
\end{eqnarray*}
Choosing again $Z_l \in C^{\infty}(M,\Lambda M)$ with $Z_l \to Z_2$ in $H_{\Delta}$, we have 
\begin{multline*}
((W_1-W_2)Y_1|Y_2)_M = \lim_l ((W_1-W_2)Y_1|(P-\bar{W}_2)Z_l)_M \\
= \lim_l \left[ ((Q_1 - Q_2)Z_1|(P-\bar{W}_2)Z_l)_M + ((P + W_2)(W_1^t - W_2^t) Z_1|(P-\bar{W}_2)Z_l)_M \right].
\end{multline*}
The first term in brackets has limit $((Q_1 - Q_2)Z_1|Y_2)_M$. After integrating by parts using \eqref{iparts1} and \eqref{iparts2}, the second term becomes 
\[
\lim_l \left[ \frac{1}{i} ((\nu \wedge + i_{\nu})(W_1^t - W_2^t) Z_1|(P-\bar{W}_2)Z_l)_{\partial M} + ((W_1^t - W_2^t) Z_1|(-\Delta+\hat{Q}_2)Z_l)_M \right].
\]
Since $W_1^t = W_2^t$ at $\partial M$ to first order, the boundary term vanishes. Also, by \eqref{zl_schrodinger_limit} the second term has limit zero. This proves \eqref{w_q_difference_identity}, and combining this with \eqref{w_difference_identity} proves the lemma.
\end{proof}

We now explain the connection between the Cauchy data set and the admittance map, which is a standard way for expressing boundary measurements for Maxwell equations. First we introduce some function spaces. The surface divergence of $f \in H^s(\partial M, \Lambda^1 \partial M)$ is given by 
\begin{equation*}
\text{Div}(f) = \langle d_{\partial M} f, dS \rangle
\end{equation*}
where $dS$ is the volume form on $\partial M$. Define the spaces 
\begin{gather*}
H^2_{\text{Div}}(M) = \{ u \in H^2(M, \Lambda^1 M) \,;\, \text{Div}(tu) \in H^{3/2}(\partial M) \}, \\
T H^{3/2}_{\text{Div}}(\partial M) = \{ f \in H^{3/2}(\partial M, \Lambda^1 \partial M) \,;\, \text{Div}(f) \in H^{3/2}(\partial M) \}.
\end{gather*}
By \cite[Theorem A.1 and subsequent remark]{KSaU}, there is a discrete set $\Sigma \subset \C$ such that if $\omega \in \C \setminus \Sigma$, then for any $f \in TH^{3/2}_{\mathrm{Div}}(\partial M)$ the Maxwell equations \eqref{maxwell equations} have a unique solution $(E, H) \in H^2_{\mathrm{Div}}(M) \times H^2_{\mathrm{Div}}(M)$ satisfying $tE = f$ on $\partial M$. Assume that $\omega$ is not a resonant frequency, and let $\Gamma_1$ and $\Gamma_2$ be nonempty open subsets of $\partial M$. If $TH^{3/2}_{\mathrm{Div,c}}(\Gamma_1)$ consists of those elements in $TH^{3/2}_{\mathrm{Div}}(\partial M)$ that are compactly supported in $\Gamma_1$, we may define the partial admittance map 
$$
\Lambda^{\Gamma_1, \Gamma_2}: TH^{3/2}_{\mathrm{Div, c}}(\Gamma_1) \to TH^{3/2}_{\mathrm{Div}}(\Gamma_2), \ \ f \mapsto tH|_{\Gamma_2}
$$
where $(E,H)$ is the unique solution of \eqref{maxwell equations} satisfying $tE = f$ on $\partial M$. Note also that any solution $(E,H) \in H^2 \times H^2$ of \eqref{maxwell equations} is in $H^2_{\mathrm{Div}} \times H^2_{\mathrm{Div}}$, by using the surface divergence as in \cite[Lemma 3.2]{KSaU}.

Now if $\omega$ is not a resonant frequency, it is clear that $C^{\Gamma_1,\Gamma_2}$ is the graph of $\Lambda^{\Gamma_1, \Gamma_2}$:
$$
C^{\Gamma_1,\Gamma_2} = \{Ê(f, \Lambda^{\Gamma_1, \Gamma_2} f) \,;\, f \in TH^{3/2}_{\mathrm{Div, c}}(\Gamma_1) \}.
$$
Thus $C^{\Gamma_1,\Gamma_2}$ and $\Lambda^{\Gamma_1, \Gamma_2}$ are equivalent information in this case.

\section{Carleman estimates and CGO solutions} \label{section_cgo}

Let $(M,g)$ be an admissible $3$-manifold as in Definition \ref{def of admissible}, so that $M \subset \subset \R\times M_0$ and $g =  c(e\oplus g_0)$ where $(M_0,g_0)$ is simple. Let $(x_1,x')$ be global coordinates in $\R \times M_0$, and let $\phi(x_1,x') = x_1$ be the natural LCW on $M$.

It is convenient to begin with a reduction to the case where $c \equiv 1$.  This can be done using the following lemma from ~\cite{KSaU}.

\begin{lemma} (\cite{KSaU}, Lemma 7.1)
Let $(M,g)$ be a compact Riemannian 3-manifold with smooth boundary, and let $\eps$ and $\mu$ be $C^2$ functions on $M$ with positive real part.  Let $c$ be any positive smooth function on $M$, and let $C_{g,\eps,\mu}^{\Gamma_1, \Gamma_2}$ represent the Cauchy data for $\eps, \mu$ with respect to the metric $g$.  Then
\[
C_{cg,\eps,\mu}^{\Gamma_1, \Gamma_2} = C_{g,c^{1/2}\eps, c^{1/2}\mu}^{\Gamma_1, \Gamma_2}.
\]
\end{lemma}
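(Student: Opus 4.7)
The plan is to verify the stated equality by observing that conformal changes of the metric in dimension three can be absorbed into the material parameters. The main tool is the transformation law for the Hodge star under conformal rescaling: on an $n$-dimensional Riemannian manifold, if $\tilde g = c g$ then for any $k$-form $\eta$ one has $*_{\tilde g} \eta = c^{n/2 - k} *_g \eta$. In our setting $n = 3$ and the relevant objects $dE, dH$ are $2$-forms, so this specializes to $*_{cg} \eta = c^{-1/2} *_g \eta$ for $2$-forms.

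Using this, I would first show the following pointwise equivalence of equations. Suppose $(E,H)$ is a pair of $1$-forms in $M$. The Maxwell system with respect to the metric $cg$ and parameters $\eps, \mu$ reads
\begin{equation*}
*_{cg} dE = i\omega \mu H, \qquad *_{cg} dH = -i\omega \eps E.
\end{equation*}
Applying the transformation law to the left-hand sides gives
\begin{equation*}
c^{-1/2} *_g dE = i\omega \mu H, \qquad c^{-1/2} *_g dH = -i\omega \eps E,
\end{equation*}
which, after multiplying by $c^{1/2}$, is exactly the Maxwell system with respect to the metric $g$ and parameters $c^{1/2}\eps$, $c^{1/2}\mu$. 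Note that the exterior derivative $d$ does not depend on the metric, so the rescaling only affects the Hodge star.

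Next I would record that the tangential trace operator $t : \eta \mapsto i^* \eta$ is purely topological: it is defined as pullback under the inclusion $i : \partial M \to M$ and does not involve the metric. Consequently, for any pair $(E, H)$ of $1$-forms the boundary quantities $(tE|_{\Gamma_1}, tH|_{\Gamma_2})$ and the support condition $\supp(tE) \subset \Gamma_1$ are identical whether we view $(E,H)$ as solutions of the Maxwell system for $(cg, \eps, \mu)$ or for $(g, c^{1/2}\eps, c^{1/2}\mu)$. Since positivity of $\mathrm{Re}(\eps)$ and $\mathrm{Re}(\mu)$ is preserved (as $c$ is a positive real function) and the underlying Sobolev space $H^2(M, \Lambda^1 M)$ is unchanged as a set (the norms for $g$ and $cg$ are equivalent because $c$ is smooth and bounded away from zero on the compact manifold $M$), the set of admissible solutions is the same in both formulations. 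Combining the two observations, the Cauchy data set for $(cg, \eps, \mu)$ on $(\Gamma_1, \Gamma_2)$ coincides with the Cauchy data set for $(g, c^{1/2}\eps, c^{1/2}\mu)$, which is the asserted identity.

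There is no substantial obstacle here; the result is essentially a one-line application of the conformal weight of the Hodge star on middle-degree forms in three dimensions. The only thing to be careful about is to apply the transformation law with the correct exponent ($n/2 - k$ with $n = 3$, $k = 2$) so that the factor is $c^{-1/2}$ and ends up shifting $\eps, \mu$ by a factor of $c^{1/2}$ rather than, say, $c^{-1/2}$ or $c$. This is the dimension-three feature that makes the Maxwell system (at this level) conformally covariant up to a rescaling of the coefficients, and is the reason the reduction to $c \equiv 1$ is available in the first place.
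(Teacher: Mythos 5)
Your proposal is correct and follows essentially the same route as the paper: both rest on the conformal scaling law $*_{cg}\,\eta = c^{3/2-k}*_g\eta$ for $k$-forms in dimension three (giving the factor $c^{-1/2}$ on the $2$-forms $dE$, $dH$), which converts the system for $(cg,\eps,\mu)$ into the system for $(g,c^{1/2}\eps,c^{1/2}\mu)$. Your additional remarks that the tangential trace is metric-independent and that the $H^2$ solution class is unchanged are exactly the points the paper leaves implicit when it notes that the argument goes through for Cauchy data sets rather than admittance maps.
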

\begin{proof}
Lemma 7.1 in ~\cite{KSaU} is actually phrased in terms of the admittance map, which requires the additional condition that solutions to the Maxwell equations are unique.  However, the same proof works when the lemma is phrased in terms of the Cauchy data.  The key fact is that for a $k$-form $u$, 
\[
*_{cg} u = c^{3/2 - k} *_g u,
\]
so $(E,H)$ satisfies \eqref{maxwell equations} with metric $cg$ and coefficients $\eps$ and $\mu$ if and only if $(E,H)$ satisfies \eqref{maxwell equations} with metric $g$ and coefficients $c^{\half} \eps$ and $c^{\half} \mu$.  
\end{proof}

Thus it suffices to prove Theorem \ref{main theorem} in the case where $c \equiv 1$. We will from now on assume that $g = e \oplus g_0$ where $(M_0,g_0)$ is a simple $2$-manifold, and that we are working with the LCW $\phi(x) = x_1$. We will also assume that $(\hat{M}_0,g_0)$ is another simple $2$-manifold which is slightly larger than $M_0$.

As indicated in Lemma \ref{integral identity with boundary conditions}, we will need to construct two types of complex geometrical optics solutions. First we need sufficiently regular solutions to the Maxwell equations, with no restrictions on their boundary values. By Section \ref{sec:sec_reductions} it suffices to construct solutions $Z$ to the equation $(-\Delta + Q) Z = 0$ in $M$â such that $Y = (P-W^t) Z$ satisfies $Y^0 = Y^3 = 0$. Such solutions were obtained in \cite[Theorem 6.1a]{KSaU}. Second, we need solutions of the Dirac equation $(P+W^*)Y = 0$ satisfying $t Y |_\Gamma= 0$. Here $\Gamma \subset \partial M$ is meant to be a open neighbourhood of $\partial M \setminus \tilde{F}$, where $\tilde{F}$ is as in the statement of Theorem \ref{main theorem}.  By proper choice of $\Gamma$, we can ensure that 
\begin{equation}\label{Gamma inequality}
\langle d\phi, \nu \rangle < 0 \mbox{ on } \overline{\Gamma}.
\end{equation}

The next theorem states the existence and basic properties of both types of complex geometrical optics solutions.

\begin{thm}
\label{solution to maxwell}
Let $\eps, \mu \in C^3(M)$ have positive real parts, and let $W$ and $Q$ be as in Section \ref{sec:sec_reductions}. Fix $p \in \hat{M}_0 \setminus M_0$, and let $(r,\theta)$ be polar normal coordinates in $(\hat{M}_0,g_0)$ with center $p$, so that $(x_1,r,\theta)$ are global coordinates near $M$. Let $s_0, t_0$ be real constants, let $\lambda > 0$, and let $b(\theta) \in C^{\infty}(S^1)$.
\begin{enumerate}
\item[(a)]
For $\abs{\tau}$ sufficiently large and outside a countable subset of $\mR$, one can construct solutions to the equation $(-\Delta + Q)Z =0$ in $M$ satisfying
\[ (P + W) Y = 0, \qquad Y= (P-W^t) Z,\]
\[Y^0 = Y^3 = 0\]
where $Z \in H^3(M, \Lambda M)$ has the form
\begin{eqnarray}
\label{form of the maxwell solution}
Z = e^{-\tau(x_1 + ir)} |g(r,\theta)|^{-1/4}e^{i\lambda(x_1 + ir)} b(\theta)\begin{pmatrix} s_0\\0\\\hline t_0 *1\\0\end{pmatrix} +e^{-\tau(x_1 + ir)} R
\end{eqnarray}
with $\|R\|_{H^s} \leq C \tau^{s -1}$ for $0\leq s\leq 2$.
\item[(b)]
Let $\Gamma \subset \partial M$ be an open set so that \eqref{Gamma inequality} holds for $\phi(x) = x_1$. For $\tau > 0$ sufficiently large, there exists a solution $Z \in H_{\Delta}$ to $(-\Delta+\hat{Q})Z = 0$ in $M$ so that 
\[(P+W^*)Y = 0, \qquad Y= (P-\bar{W}) Z,\] and $Y$ has the form 
\begin{eqnarray}
\label{form of dirac solution}
Y = e^{\tau (x_1 + ir)} |g(r,\theta)|^{-1/4}\begin{pmatrix} s_0\\-is_0 dx^1\wedge dr\\\hline t_0 * 1\\ it_0*dx^1 \wedge dr  \end{pmatrix}  + e^{\tau (x_1 + ir)} (\tilde R +R')
\end{eqnarray}
with $\|\tilde R\|_{H^{-1}} \leq C\tau^{-3/2}$, $\|R'\|_{L^2}\leq C\tau^{-1/2}$, and $Y$ satisfies the boundary condition \[t Y|_{\Gamma} = 0.\]
\end{enumerate}
\end{thm}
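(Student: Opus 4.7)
Part (a) follows from \cite[Theorem 6.1a]{KSaU} essentially verbatim: one constructs a CGO solution of $(-\Delta+Q)Z=0$ using the WKB ansatz with complex phase $\psi = x_1 + ir$ (whose real part is the LCW $\phi = x_1$), amplitude $|g|^{-1/4}$ solving the associated transport equation, and the free holomorphic factor $e^{i\lambda\psi}b(\theta)$. Setting $Y = (P-W^t)Z$ then gives $(P+W)Y = 0$ automatically via the factorization in Lemma~\ref{lemma:reduction_schrodinger}, and the conditions $Y^0 = Y^3 = 0$ are purely algebraic: the ansatz is supported only in degrees $0$ and $3$, so $PZ$ lives in degrees $1$ and $2$, while the off-diagonal entries of $W^t$ involving $D\alpha\wedge$ or $D\beta\wedge$ vanish identically on $0$- or $3$-forms in a three-manifold; together with the transport cancellation this forces vanishing of $Y^0,Y^3$ at every order.

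For part (b) I would combine an explicit WKB ansatz with the Carleman estimate with boundary terms from \cite{CST}. First, choose an amplitude $A$ of the form $|g|^{-1/4}$ times a matrix depending on $s_0,t_0$ so that $Z_0 := e^{\tau\psi}A$ has $(P-\bar W)Z_0$ matching the leading column vector in \eqref{form of dirac solution}. Computing $(P-\bar W)Z_0$ produces two contributions: the $O(\tau)$ symbol-level term $\tfrac{\tau}{i}c(d\psi)A$, where $c(d\psi):=d\psi\wedge\cdot - i_{d\psi}\cdot$ is nilpotent because $|d\psi|^2 = 0$; and the $O(1)$ subprincipal term $\tfrac{1}{i}(d-\delta)A - \bar W A$. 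The explicit matrix in \eqref{form of dirac solution}, with its $1$-form part $it_0\,{*dx^1\wedge dr}$ and $2$-form part $-is_0\, dx^1\wedge dr$, is produced by a careful choice of $A$ after straightforward computation in the coordinates $(x_1,r,\theta)$. Since $|g|^{-1/4}$ solves the transport equation for the conjugated Hodge Laplacian in the admissible setting, the residual satisfies $(-\Delta + \hat Q)Z_0 = e^{\tau\psi}f_0$ with $\|f_0\|_{L^2}$ uniformly bounded in $\tau$.

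Next, I would apply the Carleman estimate with boundary terms from \cite{CST} to find a correction $Z_1 \in H_\Delta$ solving $(-\Delta+\hat Q)Z_1 = -e^{\tau\psi}f_0$ under the combined boundary conditions $tZ_1|_\Gamma = t{*}Z_1|_\Gamma = 0$ together with the matching Hodge conditions $t\delta Z_1|_\Gamma = td{*}Z_1|_\Gamma = 0$ (both relative and absolute simultaneously). The assumption $\langle d\phi,\nu\rangle < 0$ on $\overline\Gamma$ makes the Carleman boundary term favorable on $\Gamma$, so the estimate yields solvability with the quantitative gain of a factor of $\tau$. These boundary conditions force $t(PZ_1)|_\Gamma = 0$: tangentially $t(dZ_1)|_\Gamma = d_{\partial M}(tZ_1)|_\Gamma = 0$, while via the Hodge-star identity $t{*}\eta = \pm {*_{\partial M}}\, i_\nu\eta$ the absolute conditions give $t(\delta Z_1)|_\Gamma = 0$. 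Since $\bar W$ is multiplicative we also have $t(\bar W Z_1)|_\Gamma = 0$; after arranging that the leading trace $tY_0|_\Gamma$ vanishes by a smooth cutoff modification of $A$ supported near $\Gamma$ (contributing only to lower orders in $\tau$, absorbed in the remainder), we obtain $tY|_\Gamma = 0$ for $Z = Z_0 + Z_1$.

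The splitting $Y = (\text{leading}) + e^{\tau\psi}(\tilde R + R')$ then reflects the structure of $(P-\bar W)Z_1$: writing $z_1 := e^{-\tau\psi}Z_1$ with $\|z_1\|_{L^2} \lesssim \tau^{-1/2}$ from the Carleman solvability, the piece $\tfrac{\tau}{i}c(d\psi)z_1$ lies in $L^2$ and contributes $R'$ at the stated rate $\tau^{-1/2}$, while the first-order piece $\tfrac{1}{i}(d-\delta)z_1$ lives in $H^{-1}$; a duality-based Carleman argument applied to the adjoint equation produces $\tilde R$ at rate $\tau^{-3/2}$. The principal obstacle throughout is the boundary-condition compatibility: the Dirac-trace condition $tY|_\Gamma = 0$ is not native to the Hodge Laplace framework, and the simultaneous relative-plus-absolute Carleman estimate of \cite{CST} is precisely what is needed to enforce it on $\Gamma$ while preserving the WKB structure of the CGO; everything beyond that is bookkeeping.
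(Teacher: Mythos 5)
Your part (a) is essentially the paper's: both reduce to \cite[Theorem 6.1a]{KSaU}, with interior elliptic regularity upgrading $Z$ to $H^3$. One caveat: the conditions $Y^0=Y^3=0$ are not ``purely algebraic'' consequences of the ansatz; enforcing them is exactly why $\tau$ must avoid a countable exceptional set in the statement.

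Part (b) has the right architecture (WKB amplitude plus a Carleman-based remainder, which is the paper's route), but the boundary conditions you impose are both unobtainable and insufficient, and this is where the actual content of the proof lies. (i) Requiring $tZ_1=t*Z_1=t\delta Z_1=td*Z_1=0$ on $\Gamma$ is overdetermined: the relative conditions and the absolute conditions are each already a complete set of boundary conditions for the second-order Hodge Laplacian (degree by degree, each supplies as many scalar conditions as the form has components), so imposing both prescribes essentially full Cauchy data on the open set $\Gamma$, which unique continuation rules out for a nonzero right-hand side. The estimate of \cite{CST} (Proposition \ref{carleman estimates}) supports only $tu|_{\Gamma}=0$ together with the single modified condition $t(e^{\tau\phi}\delta e^{-\tau\phi}u+\sigma u_\perp)|_{\Gamma}=0$, and the solvability result (Proposition \ref{construction of remainder}) prescribes exactly these two traces and nothing more. (ii) The step ``$t(\bar W Z_1)|_{\Gamma}=0$ since $\bar W$ is multiplicative'' is false: $W$ contains the contraction $*D\alpha\wedge *$, and $tZ_1|_{\Gamma}=0$ leaves the normal part $Z_{1,\perp}$ unconstrained; its image under a contraction has nonzero tangential trace in general (e.g.\ $t\,i_{D\alpha}(f\nu)=f\langle D\alpha,\nu\rangle$). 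This is precisely why the paper's second boundary condition is $t(\delta Z+i\bar W Z_\perp)|_{\Gamma}=0$ rather than $t\delta Z|_{\Gamma}=0$: with $tZ|_{\Gamma}=0$ one has $Z|_{\Gamma}=Z_\perp|_{\Gamma}$ and $tdZ|_{\Gamma}=d_{\partial M}tZ|_{\Gamma}=0$, hence $tY|_{\Gamma}=-\frac{1}{i}\,t(\delta Z+i\bar W Z_\perp)|_{\Gamma}$, and the $WZ_\perp$ term must be absorbed into the boundary condition of the solvability result, where it appears as the $tWu_\perp$ modification. (iii) Cutting off the amplitude near $\Gamma$ to kill $tY_0|_{\Gamma}$ does not contribute ``only to lower orders'': the cross terms $\tau\langle d\psi,d\chi\rangle A$ in $e^{-\tau\psi}\Delta(e^{\tau\psi}\chi A)$ are $O(\tau)$ in $L^2$, so the resulting correction would be $O(1)$ --- the same size as the amplitude. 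The paper instead feeds the nonzero traces of $A$ into Proposition \ref{construction of remainder} as inhomogeneous boundary data $f_1=f_2=-A$, whose contribution to $\|R\|$ is $O(\tau^{-1/2})$ by the weights in \eqref{estimate by RHS}.
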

\begin{proof}[Proof of Theorem \ref{solution to maxwell}(a)]
It is proved in \cite[Theorem 6.1a]{KSaU} that there is a solution $Z \in H^2(M, \Lambda M)$ of this form. That theorem only states $L^2$ bounds for $R$, but the proof of the theorem relies on \cite[Proposition 5.1]{KSaU} which actually gives the $H^s$ bounds for $0 \leq s \leq 2$. It remains to show that under the assumption $\eps, \mu \in C^3(M)$ (which implies $Q \in C^1$) one has in fact $Z \in H^3(M, \Lambda M)$. This follows from interior elliptic regularity since the solution in \cite[Theorem 6.1a]{KSaU} is constructed in a neighbourhood of $M$, and $Z$ satisfies $-\Delta Z = -QZ$ where $QZ \in H^1$ in this neighbourhood.
\end{proof}

In the remainder of this section we will prove Theorem \ref{solution to maxwell}(b). This will be done by constructing a solution for the equation $(-\Delta + \hat Q)Z =0$
of the form 
\begin{eqnarray}
\label{form of Z}
Z = e^{\tau x_1}(A + R), \qquad tZ|_\Gamma = t(\delta + i\bar W)Z|_\Gamma = 0,
\end{eqnarray}
for some appropriate amplitude $A$ chosen so that 
\[e^{-\tau x_1}(-\Delta +\hat Q)e^{\tau x_1}A = O_{L^2(M)}(1).\]
Then $Y = (P-\bar W) Z$ solves $(P + W^*) Y = 0$ and satisfies the desired boundary conditions.

The remainder term $R$ in \eqref{form of Z} will be constructed using the following Carleman estimate for graded forms with suitable boundary conditions.  If $u$ is a graded form we write $u_\perp = \nu \wedge i_\nu u $ and $u_{||} = u - u_\perp$. The proof of the following estimate was first given in \cite{CST}. Here $\|\cdot\| = \|\cdot\|_{M}$ and $\|\cdot\|_{\partial M}$ are the relevant $L^2$ norms in $M$ and $\partial M$.

\begin{prop}
\label{carleman estimates} {\rm (\cite[Theorem 7.2]{CST})} 
Let $(M,g)$ be an admissible manifold as in Definition \ref{def of admissible} and $\phi(x) = x_1$ a LCW. Let $\hat{Q}$ and $\sigma$ be $L^\infty$ endomorphisms on the space of graded forms. There is $C > 0$ such that for all smooth graded forms $u$ satisfying
\[te^{\tau\phi}\delta e^{-\tau\phi}u + t\sigma u_\perp|_{\Gamma} = tu|_{\Gamma} =0,\]
\[u|_{\partial M \backslash\Gamma} = \nabla_\nu u |_{\partial M \backslash\Gamma} =0,\]
and for all $\tau \geq C$, we have the following estimate
\[\tau \|u\| + \|\nabla u\| + \tau^{3/2} \|ti_\nu u\|_{\partial M} + \sqrt\tau \|\nabla' ti_\nu u\|_{\partial M} + \sqrt\tau \|t\nabla_\nu u_{||}\|_{\partial M} \leq C \|e^{\tau \phi} (-\Delta + \hat{Q}) e^{-\tau \phi} u\|.\]
\end{prop}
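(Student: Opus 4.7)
The plan is to prove the estimate by conjugation. Set $L_\phi = e^{\tau\phi}(-\Delta + \hat Q)e^{-\tau\phi}$ and split its principal part as $A + iB$ with $A$ formally self-adjoint and $iB$ formally skew-adjoint on $L^2(M,\Lambda M)$. The flat LCW $\phi = x_1$ on the admissible product metric $g = e \oplus g_0$ satisfies the defining Poisson-bracket condition of an LCW (see \cite{DKSaU}). Expanding
\begin{equation*}
\|L_\phi u\|^2 = \|Au\|^2 + \|Bu\|^2 + i([A,B]u,u) + \mathrm{BT}(u),
\end{equation*}
the interior part of the commutator produces a lower bound of order $\tau \|u\|^2$, and adding a multiple of the Rellich identity $((-\Delta)u,u) = \|\nabla u\|^2 + (\text{boundary})$ upgrades this to $\tau \|u\| + \|\nabla u\| \lesssim \|L_\phi u\| + (\text{boundary contributions})$. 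The potential $\hat Q$ is lower order and absorbed for $\tau$ large; curvature terms coming from the Weitzenbock formula on $k$-forms are likewise absorbed.

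The core issue is the boundary term $\mathrm{BT}(u)$. After the integrations by parts producing $\|Au\|^2$, $\|Bu\|^2$ and $i([A,B]u,u)$, and splitting $u = u_\perp + u_{||}$, every trace appearing on $\partial M$ can be expressed in terms of $tu$, $t i_\nu u$, $t\nabla_\nu u_{||}$ and $t\nabla_\nu i_\nu u$ via the structural identity
\begin{equation*}
\delta u|_{\partial M} = -\nabla_\nu i_\nu u + (\text{tangential derivatives of } tu,\, ti_\nu u).
\end{equation*}
On $\partial M \setminus \Gamma$ the vanishing of $u$ and $\nabla_\nu u$ kills all contributions. On $\Gamma$, the condition $tu|_\Gamma = 0$ eliminates the $tu$-terms, and the leading $\tau$-coefficient of the remaining quadratic form in $(ti_\nu u,\, t\nabla_\nu u_{||},\, t\nabla_\nu i_\nu u)$ is proportional to $-\langle d\phi, \nu\rangle > 0$ on $\Gamma$ by \eqref{Gamma inequality}, which is what yields the positive boundary norms $\tau^{3/2}\|ti_\nu u\|_{\partial M}$, $\sqrt\tau \|\nabla' ti_\nu u\|_{\partial M}$ and $\sqrt\tau\|t\nabla_\nu u_{||}\|_{\partial M}$ on the left.

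The main obstacle will be the cross term between $ti_\nu u$ and $t\nabla_\nu u_{||}$ in $\mathrm{BT}(u)$, which has indefinite sign and threatens to dominate the good terms. This is precisely why the second boundary condition $t e^{\tau\phi}\delta e^{-\tau\phi}u + t\sigma u_\perp|_\Gamma = 0$ has been imposed: the endomorphism $\sigma$ is chosen so that, after substituting the identity above for $t\delta u$, the cross term is cancelled and the quadratic form in the three retained traces becomes positive definite. A Young inequality absorbs residual lower-order boundary and interior errors into the leading terms, and for $\tau \geq C$ the full estimate follows.
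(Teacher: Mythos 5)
First, note that the paper does not prove this proposition at all: it is imported verbatim as \cite[Theorem 7.2]{CST}, so any comparison is really with the proof given there, which follows the Kenig--Sj\"ostrand--Uhlmann scheme for Carleman estimates with boundary terms adapted to the Hodge Laplacian on graded forms.

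Your sketch has a genuine gap at its very first quantitative step. You claim that in the expansion $\|L_\phi u\|^2 = \|Au\|^2 + \|Bu\|^2 + i([A,B]u,u) + \mathrm{BT}(u)$ ``the interior part of the commutator produces a lower bound of order $\tau\|u\|^2$.'' For the weight $\phi = x_1$ on the product metric $g = e \oplus g_0$ this is false: the conjugated operator is $-\nabla_{\partial_1}^2 + 2\tau\nabla_{\partial_1} - \tau^2 - \Delta_{x'}$, so $A = -\nabla_{\partial_1}^2 - \tau^2 - \Delta_{x'}$ and $B = -2i\tau\nabla_{\partial_1}$ have $x_1$-independent coefficients and $[A,B] \equiv 0$. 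This is exactly what ``limiting'' means for a Carleman weight; there is no commutator positivity to exploit, and the entire $\tau\|u\|$ gain in your argument has no source. The standard repair --- used in \cite{KSU}, \cite{DKSaU} and in \cite{CST} itself --- is to convexify the weight, replacing $\phi$ by $\phi + \phi^2/(2\eps)$ (or to run a direct spectral/ODE analysis in the transversal variable using the product structure), prove the estimate for the perturbed weight, and then remove the perturbation; this also changes the boundary quadratic form, so the sign analysis on $\Gamma$ must be redone for the modified weight rather than for $\phi$ itself. A second, smaller misreading: $\sigma$ is an \emph{arbitrary} $L^\infty$ endomorphism in the statement, dictated in the application by the potential $W$ of the Dirac system, not something you are free to ``choose so that the cross term is cancelled.'' The boundary condition $te^{\tau\phi}\delta e^{-\tau\phi}u + t\sigma u_\perp|_\Gamma = 0$ is used only to express $t\delta_\phi u|_\Gamma$ in terms of $ti_\nu u|_\Gamma$ (a controlled trace) up to a bounded zeroth-order error, which is then absorbed for large $\tau$; the positivity of the boundary form must be established for every such $\sigma$. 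Until the interior estimate is obtained by one of the two routes above, the rest of the outline cannot be carried through.
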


Proposition \ref{carleman estimates} relies on the fact that \eqref{Gamma inequality} holds on $\Gamma$ -- this is one of the key reasons for choosing $\tilde{F}$ as we did in Theorem \ref{main theorem}.    

\subsection{Construction of the amplitude}
 If $\hat Q$ is an $L^{\infty}$ endomorphism of $\Lambda M$, we look for a solution $Z$ of 
$$
(-\Delta + \hat Q)Z = 0 \quad \text{in } M
$$
having the form 
$$
Z = e^{\tau x_1}(A + R)
$$
where $\tau > 0$ is large. To this end we will construct an amplitude $A$ such that 
\[
\| e^{-\tau x_1} \Delta e^{\tau x_1} A\|_{M}, \|A\|_{M} = O_{L^2(M)}(1).
\]
Recall that since $(M,g)$ is an admissible manifold and we have reduced to the case $c=1$, the metric can be written globally as $g = e \oplus g_0$ for some conformal factor $c$ and metric $g_0$ on the simple 2-dimensional manifold $M_0$. 

We now construct the amplitude $A$ mentioned above.

\begin{prop} 
\label{amplitude}
For any smooth functions $b^j, b^j_r, b^j_\theta \in C^\infty (S^{1})$ there exist graded forms $ A = A^0+ A^1+ *B^1+*B^0$ satisfying 
\[ \|e^{-\tau x_1} \Delta  e^{\tau x_1}A\|_M = O(1),\ \ \|A\|_M = O(1)\]
of the form
\begin{eqnarray*}
&& A^0 = \abs{g_0}^{-1/4} e^{i\tau r}b^0(\theta),\\
&& A^1 = \abs{g_0}^{-1/4} e^{i\tau r}b^1(\theta) dx_1 + e^{i\tau r}(\abs{g_0}^{-1/4} b_r^1(\theta)\,dr + \abs{g_0}^{1/4} b^1_\theta(\theta) \,d\theta),\\
&& B^0 = \abs{g_0}^{-1/4} e^{i\tau r}b^3(\theta),\\
&& B^1 = \abs{g_0}^{-1/4} e^{i\tau r}b^2(\theta) dx_1 + e^{i\tau r}(\abs{g_0}^{-1/4} b^2_r(\theta)\,dr + \abs{g_0}^{1/4} b^2_\theta(\theta) \,d\theta).
\end{eqnarray*} 
\end{prop}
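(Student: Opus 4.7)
The approach is direct substitution into the conjugated Hodge Laplacian. Since the metric has the product form $g = e \oplus g_0$ and the proposed amplitude $A$ is independent of $x_1$, a short calculation gives
\[
e^{-\tau x_1} \Delta e^{\tau x_1} A \;=\; \tau^2 A + \Delta_{g_0} A,
\]
where on the right $\Delta_{g_0}$ acts on each form-degree component of $A$ via the appropriate Hodge Laplacian of forms on $M_0$. It therefore suffices to show $(-\Delta_{g_0})A = \tau^2 A + O_{L^2(M)}(1)$. Since $*$ commutes with $\Delta$ and is an $L^2$-isometry, the parts $*B^0$ and $*B^1$ reduce to verification on $B^0$ and $B^1$, which have the same functional form as $A^0$ and $A^1$; so only the scalar and $1$-form pieces require analysis.

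For the scalar piece $A^0 = \abs{g_0}^{-1/4} e^{i\tau r} b^0(\theta)$, in polar normal coordinates with $g_0 = dr^2 + m^2 d\theta^2$ one has the Liouville-type identity
\[
(-\Delta_{g_0})(m^{-1/2} v) \;=\; m^{-1/2}\bigl(-\pd_r^2 v - m^{-2} \pd_\theta^2 v + q(r,\theta)\, v\bigr)
\]
for a smooth potential $q$. Plugging in $v = e^{i\tau r} b^0(\theta)$, only $-\pd_r^2 v$ produces a $\tau^2$ contribution and everything else is uniformly bounded, so $(-\Delta_{g_0}) A^0 = \tau^2 A^0 + O(1)$. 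The $B^0$ analysis is identical, and the $dx_1$-component of $A^1$ reduces to the scalar case as well because $dx_1$ is parallel in the product metric, so the Hodge Laplacian of $A_1 \, dx_1$ equals $((-\Delta) A_1) \, dx_1$.

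For the $dr$ and $d\theta$ parts of $A^1$ (and by duality $B^1$), the plan is to expand $(-\Delta^{(1)}_{g_0}) = d\delta + \delta d$ in coordinates and apply it to each piece. The eikonal-type cancellation of $O(\tau^2)$ terms is automatic; the delicate point is that the surviving $O(\tau)$ contributions — generated when the factor $i\tau$ from $\pd_r e^{i\tau r}$ interacts with Christoffel-type factors such as $\pd_r m/m$ — must cancel \emph{exactly}, both in the diagonal coefficients ($dr$-coefficient of $-\Delta(A_r dr)$ and $d\theta$-coefficient of $-\Delta(A_\theta d\theta)$) and in the two off-diagonal cross-coefficients. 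This is precisely where the different conformal weights enter: $\abs{g_0}^{-1/4}$ is the Liouville normalization for $A_r$, while $\abs{g_0}^{1/4}$ is the Liouville normalization for $A_\theta/m$ (the coefficient in the unit coframe $\{dr, m\, d\theta\}$). With these weights in place the $O(\tau)$ terms vanish and what remains is $\tau^2 A^1 + O(1)$.

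The bound $\|A\|_M = O(1)$ follows immediately, since with the indicated conformal factors each orthonormal-frame coefficient of every component of $A$ has squared pointwise norm proportional to $m^{-1}$, giving a finite integral against the volume form $m\, dx_1\, dr\, d\theta$. The main obstacle I anticipate is the bookkeeping in the $1$-form step: each entry of $-\Delta^{(1)}(A_r dr + A_\theta d\theta)$ produces several first-order-in-$\tau$ contributions involving $\pd_r m$ and $\pd_\theta m$, and although each cancellation is elementary once carried out, there is no natural frame-change reducing the computation to the scalar WKB case — the orthonormal coframe $\{dr, m\, d\theta\}$ is not parallel, so the explicit calculation is unavoidable.
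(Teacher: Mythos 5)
Your reduction to the transversal manifold, the scalar (Liouville/WKB) analysis of $A^0$, $B^0$ and the $dx_1$-components, the use of $*\Delta = \Delta\,*$ for the $B$-parts, and the final $L^2$ bound are all correct and match the paper's argument. The weights $\abs{g_0}^{-1/4}$ and $\abs{g_0}^{+1/4}$ you propose for the $dr$- and $d\theta$-coefficients are also the right ones. The gap is that the one step you yourself identify as the crux --- the exact cancellation of all $O(\tau)$ terms, diagonal and off-diagonal, in $(-\Delta^{(1)}_{g_0} - \tau^2)(A_r\,dr + A_\theta\,d\theta)$ --- is asserted (``with these weights in place the $O(\tau)$ terms vanish'') but never carried out; your stated plan for it is a brute-force expansion of $d\delta + \delta d$ that you describe as unavoidable and do not perform. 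As written, the proof of the only nontrivial part of the proposition is missing.

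The paper closes exactly this step without any coordinate expansion, using the covariant WKB identity for the Hodge Laplacian acting on forms: for a form-valued amplitude $a'$,
\[
(-\Delta_{x'} - \tau^2)\bigl(e^{i\tau\psi}a'\bigr) = e^{i\tau\psi}\Bigl[\tau^2(\abs{d\psi}^2-1)a' - i\tau\bigl(2\nabla_{\nabla\psi}a' + (\Delta\psi)a'\bigr) - \Delta a'\Bigr],
\]
which holds because, by the Weitzenb\"ock formula, the Hodge Laplacian differs from the rough Laplacian $\nabla^*\nabla$ by a zeroth-order curvature term. With $\psi = r$ the $O(\tau)$ term is $2\nabla_{\partial_r}a' + (\Delta r)a'$, and the obstruction you name --- ``there is no natural frame-change reducing the computation to the scalar WKB case'' --- is precisely where this helps: the \emph{coordinate} coframe $\{dr, d\theta\}$ (not the orthonormal one) is an eigenbasis of $\nabla_{\partial_r}$, since in polar normal coordinates $\nabla_{\partial_r}dr = 0$ and $\nabla_{\partial_r}d\theta = -\tfrac{1}{2}\partial_r(\log\abs{g_0})\,d\theta$. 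The transport equation therefore decouples into the two scalar ODEs $2\partial_r a_r + \tfrac{1}{2}\partial_r(\log\abs{g_0})\,a_r = 0$ and $2\partial_r a_\theta - \tfrac{1}{2}\partial_r(\log\abs{g_0})\,a_\theta = 0$, whose solutions are exactly your weights, and no off-diagonal bookkeeping ever arises. If you insist on the brute-force route you must actually exhibit the cancellations between $d\delta$ and $\delta d$; the covariant identity is the cleaner (and the paper's) way to finish.
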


\begin{proof}
Any $1$-form $A^1$ on $M$ may be written as 
$$
\tilde A^1 =  A_1^1 \,dx^1 + (A^1)'
$$
with $A_1^1 = \langle A, dx^1 \rangle$ and $(A^1)'$ is a $1$-form on $M_0$ that contains $x_1$ as a parameter. If $A^0$ is a $0$-form and $A^1$ is a $1$-form, we have 
\begin{gather*}
\Delta_{g} A^0 =(\partial_1^2  + \Delta_{x'} ) A^0, \\
\Delta_{ g} A^1= (\Delta_{g} A_1^1) \,dx^1 + \partial_1^2 (A^1)' + \Delta_{x'}(A^1)'
\end{gather*}
where $\Delta_{x'}$ is the Hodge Laplacian on $(M_0,g_0)$.

We compute the conjugated Hodge Laplacian acting on $0$-forms and $1$-forms, 
\begin{eqnarray*}
e^{-\tau x_1} (-\Delta_g ) e^{\tau x_1} A^0 &=& (-\partial_1^2 - 2 \tau \partial_1 - \tau^2 - \Delta_{x'})A^0, \\
e^{-\tau x_1} (-\Delta_{g} ) e^{\tau x_1} A^1 &=& \left[ (-\partial_1^2 - 2 \tau \partial_1 - \tau^2 - \Delta_{x'}) A^1_1 \right] \,dx^1 \\
& & + (-\nabla_{\partial_1}^2 -2 \tau \nabla_{\partial_1} - \tau^2 - \Delta_{x'}) (A^1)'.
\end{eqnarray*}
We now make the simplifying assumption that $A^0$ and $A^1$ only depend on $x'$. Then the above expressions simplify to 
\begin{gather*}
e^{-\tau x_1} (-\Delta_{ g} ) e^{\tau x_1} A^0= (-\Delta_{x'} - \tau^2) A^0, \\
e^{-\tau x_1} (-\Delta_{ g} ) e^{\tau x_1} A^1= \left[ (-\Delta_{x'} - \tau^2) A^1_1 \right] \,dx^1 + (-\Delta_{x'}-\tau^2) (A^1)'.
\end{gather*}
It is enough to arrange that 
\begin{eqnarray}
\label{equation for A's}
\norm{(-\Delta_{x'} - \tau^2) A^0}_{M_0} = O(1), \quad \norm{A^0}_{M_0} = O(1), \\ \nonumber
\norm{(-\Delta_{x'} - \tau^2) A^1_1}_{M_0} = O(1), \quad \norm{A^1_1}_{M_0} = O(1), \\ \nonumber
\norm{(-\Delta_{x'} - \tau^2) (A^1)'}_{M_0} = O(1),  \quad \norm{(A^1)'}_{M_0} = O(1).
\end{eqnarray}

The construction for the scalar parts $A^0$ and $A^1_1$ is identical: we look for a WKB type quasimode 
$$
A^0 = e^{i\tau \psi} a
$$
and note that 
$$
(-\Delta_{x'} - \tau^2) A^0 = e^{i\tau \psi} \left[ \tau^2 (\abs{d\psi}^2 - 1) a- i\tau ( 2\nabla_{\nabla \psi} a + (\Delta \psi) a) - \Delta a\right].
$$
In order to satisfy \eqref{equation for A's}, we need $\psi$ and $a$ to satisfy the eikonal equation $ \abs{d\psi}^2 - 1 = 0$, and the transport equation $2\nabla_{\nabla \psi} a + (\Delta \psi) a = 0$. To find $\psi$, let $(\hat{M}_0,g_0)$ be a slightly larger simple manifold than $(M_0,g_0)$, let $p \in \hat{M}_0 \setminus M_0$, let $(r,\theta)$ be polar normal coordinates in $(\hat{M}_0,g_0)$ with center $p$, and choose 
$$
\psi(r,\theta) = r.
$$
In these coordinates, the metric has the form 
$$
g_0(r,\theta) = \left( \begin{array}{cc} 1 & 0 \\ 0 & m(r,\theta) \end{array} \right)
$$
for some smooth positive function $m$. It follows that 
$$
\abs{d\psi}^2 = 1.
$$
We also have $\nabla \psi = \partial_r$ and $\Delta \psi = \frac{1}{2} \partial_r(\log\abs{g_0})$ where $\abs{g_0(r,\theta)}$ is the determinant of $g_0$. The transport equation becomes 
$$
2 \partial_r a +  \frac{1}{2} \partial_r(\log\abs{g_0}) a= 0
$$
and it has the solution 
$$
a= \abs{g_0}^{-1/4} b(\theta)
$$
where $b$ can be any function in $C^{\infty}(S^{1})$. Therefore, setting $A_0 = |g_0|^{-1/4}e^{i\tau r} b^0(\theta)$, the first line of \eqref{equation for A's} is satisfied. A similar choice for $A_1^1$, substituting $b^1$ for $b^0$, would satisfy the second line of \eqref{equation for A's}.

Moving on to $(A^1)'$, we look for an ansatz of the form 
$$
(A^1)' = e^{i\tau \psi} a', \quad a' = a_r \,dr + a_{\theta} \,d\theta.
$$
We have 
$$
(-\Delta_{x'} - \tau^2) (A^1)'= e^{i\tau \psi} \left[ \tau^2 (\abs{d\psi}^2 - 1) a' - i\tau [ 2\nabla_{\nabla \psi} a' + (\Delta \psi) a'] - \Delta a' \right].
$$
Choosing the same function $\psi(r,\theta) = r$ as above we have $\abs{d\psi}^2=1$. Since 
$$
\nabla_{\partial_r} dr = 0, \quad \nabla_{\partial_r} d\theta = -\frac{1}{2} \partial_r(\log\abs{g_0}) \,d\theta,
$$
the transport equations for $a_r$ and $a_{\theta}$ become 
\begin{align*}
2 \partial_r a_r +  \frac{1}{2} \partial_r(\log\abs{g_0}) a_r &= 0, \\
2 \partial_r a_{\theta} - \frac{1}{2} \partial_r(\log\abs{g_0}) a_{\theta} &= 0.
\end{align*}
These have the solutions 
$$
a_r = \abs{g_0}^{-1/4} b_r(\theta), \quad a_{\theta} = \abs{g_0}^{1/4} b_{\theta}(\theta)
$$
for any $b_\theta, b_r \in C^\infty(S^1)$.
Setting $\tilde A' = e^{i\tau r}(\abs{g_0}^{-1/4} b^1_r(\theta)dr +  \abs{g_0}^{1/4} b^1_{\theta}(\theta) d\theta)$, the third line of \eqref{equation for A's} is satisfied.

Now we can use the fact that $*$ commutes with $\Delta$ to construct $B^0$ and $B^1$, with the same asymptotics, and finish the proof.
\end{proof}

\subsection{Construction of the remainder}

We now construct the remainder term $R$ with the appropriate boundary conditions and asymptotics in $\tau$. More precisely we need to construct $R$ to satisfy
\[ e^{-\tau x_1}(-\Delta + \hat Q) e^{\tau x_1} R = -  e^{-\tau x_1} (-\Delta+\hat Q) e^{\tau x_1} A\]
with boundary conditions
\[t R |_\Gamma = -t A |_\Gamma, \qquad t \big(e^{-\tau x_1}\delta(e^{\tau x_1} R) + i \bar{W} R\big) |_\Gamma= -t\big(e^{-\tau x_1}\delta(e^{\tau x_1} A) + i \bar{W} A \big) |_\Gamma.\]
The first step in constructing such an $R$ is the following solvability result similar to \cite[Proposition 7.3]{CST}, obtained from Proposition \ref{carleman estimates}. Here we write $\Delta_{\phi} = e^{\tau \phi} \Delta e^{-\tau \phi}$.

\begin{prop}
\label{construction of remainder}
Let $\hat{Q}$ and $W$ be $C^1$ endomorphisms on $M$ acting on graded forms. If $\tau > 0$ is large, for any graded forms $F\in L^2(M, \Lambda M)$ and $f_1, f_2\in C^\infty(M, \Lambda M)$ there exists a solution $u \in L^2(M, \Lambda M) \cap e^{-\tau \phi} H_\Delta$ to\\
\begin{equation}\label{inhomogenous bvp}
\begin{split}
(-\Delta_{-\phi} + \hat{Q})u &= F \\
tu |_{\Gamma} &= tf_1 |_{\Gamma},\\
(e^{-\tau \phi}t\delta e^{\tau \phi}u +tW u_\perp)|_{\Gamma} &= (e^{-\tau\phi}t \delta e^{\tau \phi} f_2 +t W (f_{2})_{\perp})|_{\Gamma}
\end{split}
\end{equation}
and which satisfies the estimate
\begin{eqnarray}
\label{estimate by RHS}
\|u\|_M \leq \frac{C}{ \tau}\left( \|F\|_{M} + \frac{1}{\sqrt{\tau}}\|e^{-\tau\phi}t\delta e^{\tau\phi}f_2 + t W (f_2)_{\perp} \|_{\partial M} + \sqrt\tau \|t f_1\|_{\partial M} \right).
\end{eqnarray}
\end{prop}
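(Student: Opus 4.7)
I would prove this by the standard duality/Hahn--Banach argument based on the Carleman estimate of Proposition \ref{carleman estimates}, following the blueprint of \cite[Proposition 7.3]{CST}. Introduce the dual test space
\[
\mathcal{D} = \{ v \in C^{\infty}(M, \Lambda M) : v|_{\partial M\setminus\Gamma} = 0,\ \nabla_{\nu}v|_{\partial M\setminus\Gamma}=0,\ tv|_{\Gamma}=0,\ (te^{\tau\phi}\delta e^{-\tau\phi}v + t\sigma v_{\perp})|_{\Gamma}=0\},
\]
with an $L^{\infty}$ endomorphism $\sigma$ to be pinned down below. Applied to $-\Delta+\hat{Q}^{*}$, Proposition \ref{carleman estimates} yields for $v\in\mathcal{D}$
\[
\tau\|v\| + \tau^{3/2}\|ti_{\nu}v\|_{\partial M} + \sqrt{\tau}\|t\nabla_{\nu}v_{||}\|_{\partial M} \leq C\|(-\Delta_{\phi}+\hat{Q}^{*})v\|.
\]
The first task is to compute $((-\Delta_{-\phi}+\hat{Q})u,v)_{M} - (u,(-\Delta_{\phi}+\hat{Q}^{*})v)_{M}$ via the Green identities \eqref{iparts1}--\eqref{iparts2} applied to $e^{\tau\phi}u$ and $e^{-\tau\phi}v$. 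The conditions $v|_{\partial M\setminus\Gamma}=\nabla_{\nu}v|_{\partial M\setminus\Gamma}=0$ kill every trace contribution off $\Gamma$; on $\Gamma$, $tv=0$ forces $v$ to be normal, and using the $\sigma$-condition with $\sigma$ chosen precisely to absorb the zeroth-order contribution from $W$, one verifies that the remaining boundary integral on $\Gamma$ reduces, up to explicit constants and signs, to the two pairings
\[
\langle tu,\ t\nabla_{\nu}v_{||}\rangle_{\Gamma} + \langle e^{-\tau\phi}t\delta e^{\tau\phi}u + tWu_{\perp},\ ti_{\nu}v\rangle_{\Gamma},
\]
which couple exactly the two traces of $u$ prescribed by \eqref{inhomogenous bvp} with those traces of $v$ that appear on the left-hand side of the Carleman estimate.

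Having identified this coupling, I would define
\[
L(v) = (F,v)_{M} + \langle tf_{1},\ t\nabla_{\nu}v_{||}\rangle_{\Gamma} + \langle e^{-\tau\phi}t\delta e^{\tau\phi}f_{2}+tW(f_{2})_{\perp},\ ti_{\nu}v\rangle_{\Gamma},\quad v\in\mathcal{D}.
\]
Cauchy--Schwarz together with the Carleman estimate above directly yield
\[
|L(v)| \leq \frac{C}{\tau}\Bigl(\|F\|_{M} + \sqrt{\tau}\,\|tf_{1}\|_{\partial M} + \tfrac{1}{\sqrt{\tau}}\|e^{-\tau\phi}t\delta e^{\tau\phi}f_{2}+tW(f_{2})_{\perp}\|_{\partial M}\Bigr)\|(-\Delta_{\phi}+\hat{Q}^{*})v\|,
\]
which is exactly the norm bound of \eqref{estimate by RHS}. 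By the Carleman estimate, $v\mapsto(-\Delta_{\phi}+\hat{Q}^{*})v$ is injective on $\mathcal{D}$, so the induced functional $(-\Delta_{\phi}+\hat{Q}^{*})v\mapsto L(v)$ is well-defined and bounded on its image in $L^{2}(M,\Lambda M)$. Extending by Hahn--Banach to all of $L^{2}$ and applying Riesz representation produces $u\in L^{2}(M,\Lambda M)$ with $(u,(-\Delta_{\phi}+\hat{Q}^{*})v)_{M}=L(v)$ for every $v\in\mathcal{D}$, obeying the estimate \eqref{estimate by RHS}.

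To recover the PDE and the boundary conditions, I would first test against $v\in C^{\infty}(M,\Lambda M)$ compactly supported away from $\partial M$, which gives $(-\Delta_{-\phi}+\hat{Q})u=F$ in the sense of distributions; in particular $e^{\tau\phi}u\in H_{\Delta}$. Lemma \ref{lemma_hdelta_properties} then makes sense of the two traces $tu$ and $e^{-\tau\phi}t\delta e^{\tau\phi}u$ in $H^{-3/2}(\partial M)$, and varying $v\in\mathcal{D}$ so that $ti_{\nu}v|_{\Gamma}$ and $t\nabla_{\nu}v_{||}|_{\Gamma}$ range over suitable test classes on $\Gamma$ recovers both prescribed boundary conditions there. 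The principal obstacle is the bookkeeping in the integration by parts: Hodge--Green produces four distinct trace pairings, and one must verify that after imposing the three homogeneous conditions on $v$ exactly the two surviving pairings are those involving the combinations of traces of $u$ named in \eqref{inhomogenous bvp}, with the weights $e^{\pm\tau\phi}$ and the powers of $\tau$ matching the left-hand side of Proposition \ref{carleman estimates}. The particular form of $\sigma$ (which will involve $W$ in a specific, transpose-like manner) is forced by this matching; once it is found, the rest of the argument is essentially formal.
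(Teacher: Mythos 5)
Your proposal follows essentially the same route as the paper: the same dual test space adapted to Proposition \ref{carleman estimates} (with $\sigma$ built from $W^*$ acting on the normal part), the same functional $L$, Hahn--Banach plus Riesz for existence and the norm bound, and recovery of the PDE and boundary conditions by testing against interior and then boundary-adapted $v$. The only cosmetic difference is that the paper pairs $tf_1$ with $t i_\nu d(e^{-\tau\phi}v)$ rather than $t\nabla_\nu v_{||}$; since $tv=0$ these differ only by terms controlled with the same weight $\sqrt{\tau}$ in the Carleman estimate, so your bookkeeping caveat is exactly the point the paper resolves via \cite[Lemma 3.4]{CST} and \cite[Lemma 3.3.2]{gunther}.
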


Note that the boundary conditions in \eqref{inhomogenous bvp} make sense because $u \in e^{-\tau \phi} H_\Delta$ guarantees that the traces in \eqref{inhomogenous bvp} are well defined; see Lemma \ref{lemma_hdelta_properties}.
  
\begin{proof}
Let ${ H}$ be the subspace of $L^2(M, \Lambda M)$ defined by
\[{ H} := \{(-\Delta_{\phi} + \hat{Q}^*) v \mid v \in C^\infty,\ tv =0,\ e^{\tau \phi}t\delta (e^{-\tau \phi}v) = -ti_\nu W^* i_\nu v,\ v |_{\Gamma^c} = \nabla_N v |_{\Gamma^c} =0\}\]
Define on $H$ the linear operator 
\[
L((-\Delta_{\phi} + \hat{Q}^*) v) := (v|F)_{M} + (e^{\tau\phi} i_\nu d e^{-\tau\phi} v | t f_1)_{\Gamma} + (i_\nu v | e^{-\tau\phi}t\delta (e^{\tau \phi}f_2) + t W (f_2)_\perp)_{\Gamma}.
\]
We see that $L$ is well defined and bounded, since by Proposition \ref{carleman estimates} and by the fact that $t i_{\nu} dv = -\nabla' t i_{\nu} v$ for $v$ as above \cite[Lemma 3.4]{CST}, the quantity $|L(w)|$ is bounded by 
\begin{multline*}
\|v\|_{M} \|F\|_{M} + \frac{\| t e^{\tau\phi} i_\nu d e^{-\tau\phi} v\|_{\Gamma}}{\sqrt{\tau}} \sqrt{\tau}\|t f_1\|_{\Gamma} + \sqrt{\tau} \| i_\nu v\|_{\Gamma}\frac{\|e^{-\tau\phi}t\delta (e^{\tau \phi}f_2) + t W (f_2)_\perp\|_{\Gamma}}{\sqrt{\tau}}\\
\leq \frac{C}{{\tau}} \|(-\Delta_{\phi} + \hat{Q}^*) v\|_{M} \left( \|F\|_{M} +  \sqrt{\tau}\|t f_1\|_{\Gamma} + \frac{1}{ \sqrt{\tau}}\|e^{-\tau\phi}t\delta (e^{\tau \phi}f_2) + t W (f_2)_\perp\|_{\Gamma} \right).
\end{multline*}
By Hahn-Banach, there exists $u\in L^2$ with the estimate 
\[
\|u\|_{M} \leq \frac{C}{{\tau}}  \left( \|F\|_{M} +  \sqrt{\tau}\|t f_1\|_{\Gamma} + \frac{1}{ \sqrt{\tau}}\|e^{-\tau\phi}t\delta (e^{\tau \phi}f_2) + t W (f_2)_\perp\|_{\Gamma} \right)
\]
satisfying
\begin{multline*}
((-\Delta_{\phi} + \hat{Q}^*) v|u) = L((-\Delta_{\phi} + \hat{Q}^*) v) \\
                            =  (v|F)_{M} + (i_\nu d(e^{-\tau\phi} v) | t(e^{\tau\phi} f_1))_{\Gamma} + (i_\nu (e^{-\tau\phi}v) | t\delta (e^{\tau \phi}f_2) + e^{\tau \phi} tW(f_2)_{\perp})_{\Gamma}
\end{multline*}
for all $v$ as in the definition of $H$.

We consider first those $v$ which are compactly supported in the interior of $M$ to see that $u$ solves $(-\Delta_{-\phi} + \hat{Q}) u= F$. Thus in particular $u \in e^{-\tau \phi}H_\Delta$, so the traces of $e^{\tau \phi} u$ are well defined by Lemma \ref{lemma_hdelta_properties}. Now we can integrate by parts using \eqref{iparts1}, \eqref{iparts2}, the relation $e^{\tau \phi}t\delta (e^{-\tau \phi}v) = -i_\nu W^* i_\nu v$, and the fact that $v$ vanishes to first order on $\Gamma^c$, to see that on the boundary $u$ satisfies
\begin{equation*}
\begin{split}
&(te^{\tau\phi} u|i_\nu d(e^{-\tau\phi}v))_{\Gamma} + (te^{-\tau\phi} \delta (e^{\tau\phi}u) |ti_\nu v)_{\Gamma} + (i_\nu u |i_\nu W^* i_\nu v)_{\Gamma} \\
= \quad & (te^{\tau\phi} f_1|i_\nu d(e^{-\tau\phi}v))_{\Gamma} +(te^{-\tau\phi}\delta (e^{\tau \phi}f_2) + t W (f_2)_\perp|t  i_\nu v)_{\Gamma}
\end{split}
\end{equation*}
which becomes 
\begin{equation*}
\begin{split}
&(te^{\tau\phi} u|i_\nu d(e^{-\tau\phi}v))_{\Gamma} + (te^{-\tau\phi} \delta (e^{\tau\phi}u) + t W u_\perp|ti_\nu v)_{\Gamma} \\
= \quad &(te^{\tau\phi} f_1|i_\nu d(e^{-\tau\phi}v))_{\Gamma} +(te^{-\tau\phi}\delta (e^{\tau \phi}f_2) + t W (f_2)_\perp|t  i_\nu v)_{\Gamma}
\end{split}
\end{equation*}
for all $v$ as in the definition of $H$.
Now for all $\psi \in \Omega^{k+1}(M)$ with $\mathrm{supp}(i_{\nu} \psi) \subset \Gamma$, we may choose $v \in \Omega^k(M)$ such that 
\[v|_{\partial M} = 0,\ t\delta (e^{-\tau\phi}v) =0,\ i_\nu d(e^{-\tau\phi} v) = i_\nu\psi\]
(See Lemma 3.3.2 of \cite{gunther}.) A brief computation shows that $v$ vanishes to first order on $\Gamma^c$ and can be used in the above identity, which gives 
\[
(te^{\tau\phi} u|i_\nu \psi)_{\Gamma} = (te^{\tau\phi} f_1|i_\nu \psi)_{\Gamma}
\]
whenever $\mathrm{supp}(i_{\nu} \psi) \subset \Gamma$. Therefore we see that $te^{\tau\phi} u = te^{\tau\phi} f_1$ on $\Gamma$ and 
\[
(te^{-\tau\phi} \delta (e^{\tau\phi}u)+ t W u_{\perp}|t i_\nu v)_{\Gamma}=  (te^{-\tau\phi} \delta (e^{\tau\phi}f_2)+ tW(f_2)_{\perp}|t i_\nu v)_{\Gamma}
\]
for all $v$ as in the definition of $H$. Applying an argument similar to \cite[Lemma 3.3.2]{gunther} (see also Lemma \ref{LemmaofRequirement}) again allows us to conclude that 
\[e^{-\tau\phi}t\delta (e^{\tau\phi}u) + tWu_\perp = te^{-\tau\phi}\delta (e^{\tau\phi}f_2) + tW (f_2)_\perp\] on $\Gamma$.
\end{proof}
This gives the following existence result for complex geometrical optics solutions with boundary conditions.

\begin{prop}
\label{CGO vanishing on Gamma}
Let $\hat{Q}$ and $W$ be $C^1$ endomorphisms acting on graded forms. For $\tau>0$ sufficiently large there exist solutions to the equation 
\[(-\Delta + \hat{Q}) Z =0\]
of the form 
\[Z = e^{\tau x_1}(A +R),\]
with boundary conditions
\[tZ|_\Gamma = 0,\ t(\delta Z + i\bar{W} Z_{\perp}) |_\Gamma = 0\]
 where $A$ is as in Proposition \ref{amplitude} with $R \in L^2 \cap e^{-\tau x_1}H_\Delta$ satisfying the estimate $\|R\|\leq \frac{C}{\sqrt \tau}$. 
\end{prop}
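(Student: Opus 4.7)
The plan is to apply the solvability result Proposition \ref{construction of remainder} with $\phi(x)=x_1$, with the endomorphism called $W$ in that proposition specialized to $i\bar W$ (using our $W$ from the present statement), and with data
\[
F = -e^{-\tau x_1}(-\Delta + \hat Q)e^{\tau x_1}A, \qquad f_1 = f_2 = -A,
\]
where $A$ is the amplitude produced by Proposition \ref{amplitude}. The resulting $R \in L^2 \cap e^{-\tau x_1}H_\Delta$ will by construction satisfy $(-\Delta_{-\phi} + \hat Q)(A+R)=0$, which is exactly $(-\Delta + \hat Q)Z=0$ for $Z := e^{\tau x_1}(A+R)$. Moreover, after factoring $e^{\tau x_1}$ out of the boundary conditions $tR|_\Gamma = -tA|_\Gamma$ and $(e^{-\tau x_1}t\delta e^{\tau x_1}R + ti\bar W R_\perp)|_\Gamma = -(e^{-\tau x_1}t\delta e^{\tau x_1}A + ti\bar W A_\perp)|_\Gamma$ provided by Proposition \ref{construction of remainder}, one recovers precisely $tZ|_\Gamma = 0$ and $t(\delta Z + i\bar W Z_\perp)|_\Gamma = 0$, as required.

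The remaining task is to extract the decay $\|R\|_M = O(\tau^{-1/2})$ from the estimate furnished by Proposition \ref{construction of remainder}. I would simply insert our choice of $F$, $f_1$, $f_2$ and keep track of powers of $\tau$: Proposition \ref{amplitude} gives $\|F\|_M = O(1)$ directly; since $A$ carries only the oscillatory factor $e^{i\tau r}$ with $r$ real-valued, the boundary trace $\|tf_1\|_{\partial M}=\|tA\|_{\partial M}$ is $O(1)$ uniformly in $\tau$; and the conjugation identity $e^{-\tau\phi}\delta(e^{\tau\phi}\,\cdot\,) = \delta(\cdot) - \tau\,i_{\nabla\phi}(\cdot)$ combined with the fact that $\delta A$ itself picks up at most one power of $\tau$ from the oscillation gives $\|e^{-\tau\phi}t\delta e^{\tau\phi}f_2 + ti\bar W(f_2)_\perp\|_{\partial M} = O(\tau)$. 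Assembling these bounds in Proposition \ref{construction of remainder} yields
\[
\|R\|_M \le \frac{C}{\tau}\Bigl(O(1) + \frac{1}{\sqrt\tau}\cdot O(\tau) + \sqrt\tau \cdot O(1)\Bigr) = O(\tau^{-1/2}).
\]

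I do not anticipate a substantive obstacle: the analytic work has been absorbed into the Carleman-based solvability result Proposition \ref{construction of remainder} and the WKB-type amplitude construction Proposition \ref{amplitude}, so what remains is careful bookkeeping. The one mildly delicate point will be confirming that the middle boundary term grows no worse than $O(\tau)$; this is what makes the $\tau^{-1/2}$ prefactor in front of the $\delta$-trace in Proposition \ref{construction of remainder} exactly sharp enough to balance the $\tau$ introduced by the oscillatory amplitude $A$ and so let the estimate close at the advertised rate $\tau^{-1/2}$.
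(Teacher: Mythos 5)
Your proposal is correct and is essentially identical to the paper's own proof: the paper likewise sets $F := e^{-\tau x_1}(-\Delta+\hat Q)e^{\tau x_1}A$, feeds $-F$, $-A$, $-A$ into Proposition \ref{construction of remainder} with the boundary endomorphism taken to be $i\bar W$, and reads off $\|R\|_M \le \frac{C}{\tau}\bigl(\|F\|_M + \sqrt\tau\,\|tA\|_{\partial M} + \tau^{-1/2}\|e^{-\tau x_1}t\delta e^{\tau x_1}A + t\,i\bar W A_\perp\|_{\partial M}\bigr) = O(\tau^{-1/2})$. Your extra bookkeeping on the $O(\tau)$ size of the conjugated $\delta$-trace of $A$ is the right justification for the step the paper leaves implicit.
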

\begin{proof}
By Proposition \ref{amplitude}, the $L^2$ norm of $F :=  e^{-\tau x_1}(-\Delta + \hat{Q})e^{\tau x_1}A$ is $O(1)$ as $\tau \to \infty$.  Now apply Proposition \ref{construction of remainder} to obtain $R \in L^2 \cap e^{-\tau x_1}H_\Delta$ solving 
\begin{eqnarray*}
e^{-\tau x_1}(-\Delta + \hat Q) e^{\tau x_1} R &=& -F,\\
            t R |_\Gamma &=& -tA|_\Gamma,\\
t(e^{-\tau x_1}\delta e^{\tau x_1} R +  i\bar{W} R_\perp)|_\Gamma &=& -t (e^{-\tau x_1}\delta e^{\tau x_1} A +  i\bar{W} A_\perp)|_\Gamma \\
\end{eqnarray*}
with the estimates
\begin{eqnarray*}
\|R\|_M &\leq& \frac{C}{ \tau} \left( \|F\|_{M} + \sqrt \tau \| t A\|_{\partial M} + \frac{1}{\sqrt\tau} \| e^{-\tau x_1} t \delta e^{\tau x_1} A + t  i\bar{W} A_\perp\|_{\partial M} \right) \\
&\leq& \frac{C}{\sqrt \tau}.
\end{eqnarray*}
\end{proof}

\begin{proof}[Proof of Theorem \ref{solution to maxwell}(b)]
By Proposition \ref{CGO vanishing on Gamma}, there is a solution $Z_0 \in H_{\Delta}$ of the equation $(-\Delta + \hat Q)Z_0 =0$ satisfying 
\[
Z_0 = e^{\tau x_1}(A + R), \qquad tZ_0|_\Gamma = t(\delta + i\bar W)Z_0|_\Gamma = 0
\]
where the amplitude $A$ is chosen to be 
\[
A = e^{i\tau r} \abs{g}^{-1/4} b(\theta) \zeta_2
\]
with $\zeta_2 = -i(s_0 \,dx^1+ t_0 * dx^1)$ where $s_0, t_0$ are real constants.

We define $Z := \frac{1}{\tau} Z_0$, so $Z$ will solve the same equation as $Z_0$ with the same boundary conditions. Then $Y := (P-\bar{W})Z$ will solve $(P+W^*)Y = 0$ in the sense of distributions. Since $tZ|_{\Gamma} = 0$, we have $Z|_{\Gamma} = Z_{\perp}|_{\Gamma}$ and $tdZ|_{\Gamma} = 0$ by Lemma \ref{Trace}. Consequently 
\[
tY|_{\Gamma} = \frac{1}{i} (tdZ- t\delta Z - i t \bar{W} Z)|_{\Gamma} = -\frac{1}{i} t(\delta Z + i \bar{W} Z_{\perp})|_{\Gamma} = 0.
\]
Finally, we observe that  
\begin{align*}
Y &= \frac{1}{\tau}(P- \bar W)  (e^{\tau (x_1+ir)} \abs{g}^{-1/4} b(\theta) \zeta + e^{\tau x_1} R) \\
&= \frac{1}{\tau} P ( e^{\tau (x_1+ir)  }\abs{g}^{-1/4} b(\theta) \zeta) 
 - \frac{1}{\tau} \bar W e^{\tau (x_1 +ir) } \abs{g}^{-1/4} b(\theta) \zeta+  \frac{1}{\tau} [P, e^{\tau x_1}] R \\
 & \qquad + e^{\tau x_1}\underbrace{\frac{1}{\tau} (P-\bar W) (R)}_{\tilde R}.
\end{align*}
A direct computation (see \cite[Proof of Theorem 6.1(b)]{KSaU}) gives the result.
\end{proof}

\section{Recovery of coefficients}

Here we use the solutions constructed in Theorem \ref{solution to maxwell} to recover the coefficients of the Maxwell equations from partial boundary measurements. The argument is essentially the same as in \cite{KSaU}.

\begin{proof}[Proof of Theorem \ref{main theorem}]
Let $Z_1$ be the solution to $(-\Delta+Q_1) Z_1 = 0$ constructed in Theorem \ref{solution to maxwell}(a) having the form
\[ Z_1 = e^{-\tau(x_1 + ir)} |g|^{-1/4}e^{i\lambda(x_1 + ir)} b(\theta)\begin{pmatrix} s_0\\0\\\hline t_0 *1\\0\end{pmatrix} +e^{-\tau(x_1 + ir)} R.
\]
Choose an open set $F_1\subset \partial M$ such that $F_{\varphi} \subset F_1 \subset \subset \tilde{F}$, and let $\Gamma = \partial M \setminus \overline{F}_1$. Then \eqref{Gamma inequality} holds, and we can take $Y_2$ to be the solution to $(P+W_2^*)Y_2 = 0$ constructed in Theorem \ref{solution to maxwell}(b), satisfying $tY_2|_{\Gamma} = 0$ and having the form 
\[Y_2 = e^{\tau (x_1 - ir)} |g|^{-1/4}\begin{pmatrix} s_0\\is_0 dx^1\wedge dr\\\hline t_0 * 1\\ -it_0*dx^1 \wedge dr  \end{pmatrix}  + e^{\tau (x_1 - ir)} (\tilde R +R').\]
(In fact we solve $(P-\bar{W}_2^*) Y = 0$ where $Y$ is as in Theorem \ref{solution to maxwell}(b), and we let $Y_2 = \bar{Y}$.) Here $s_0, t_0, \lambda$ are any real constants with $\lambda > 0$, $b(\theta)$ is any function in $C^{\infty}(S^1)$, and $\tau > 0$ can be taken arbitrarily large.

By taking $\Gamma_2 = \tilde F$ in Lemma \ref{integral identity with boundary conditions} we have $\mathrm{supp}(tY_2) \subset \Gamma_2$. Thus we have the integral identity 
\begin{equation*}
((Q_1-Q_2)Z_1|Y_2)= \int_M\langle (Q_1 - Q_2) Z_1 | \overline{Y_2}\rangle \,dV = 0.
\end{equation*}
Writing this out term by term we have 
\begin{multline}
\label{boundary integral id with CGO}
\int_M  \Bigg\langle (Q_1-Q_2) e^{i\lambda(x_1+ir)} b(\theta)  \begin{pmatrix} s_0\\0\\\hline t_0 *1\\0\end{pmatrix} |  \begin{pmatrix} s_0\\ -is_0 dx^1\wedge dr\\\hline t_0 * 1\\ it_0*dx^1 \wedge dr  \end{pmatrix} \Bigg\rangle \\
 + \int_M  \Bigg\langle (Q_1-Q_2)R |(\tilde R + R')  \Bigg\rangle \abs{g}^{1/2}
  + \int_M  \Bigg\langle (Q_1-Q_2)e^{i\lambda(x_1+ir)} b(\theta)\begin{pmatrix} s_0\\0\\\hline t_0 *1\\0\end{pmatrix}  | \tilde R + R'  \Bigg\rangle \abs{g}^{1/4} \\
  +\int_M  \Bigg\langle (Q_1-Q_2)R|  \begin{pmatrix} s_0\\ -is_0 dx^1\wedge dr\\\hline t_0 * 1\\ it_0*dx^1 \wedge dr  \end{pmatrix}   \Bigg\rangle \abs{g}^{1/4} =0.
\end{multline}
Here all integrals are with respect to the $3$-form $\abs{g}^{-1/2} \,dV = \,dx_1 \,dr \,d\theta$.

We will argue that all terms in (\ref{boundary integral id with CGO}) except for the first one vanish as $\tau \to\infty$. Indeed, by Theorem \ref{solution to maxwell}, $\|R\|_{H^s} \leq C \tau^{s-1}$, $\|\tilde R\|_{H^{-1}} \leq C\tau^{-3/2}$, and $\|R'\|_{L^2}\leq C\tau^{-1/2}$. Using these estimates for $s=1$ and $s=0$ we see that the second term is $O(\tau^{-3/2})$. Here we use that $\eps_1=\eps_2$ and $\mu_1 = \mu_2$ to second order on $\partial M$, to have $Q_1=Q_2$ on $\partial M$. By the same estimates for $s=0$ we see that the third term is $O(\tau^{-1/2})$ and the fourth term is $O(\tau^{-1})$. Thus taking $\tau \to \infty$ in (\ref{boundary integral id with CGO}) implies that 
\[\int_M  \Bigg\langle (Q_1-Q_2) \begin{pmatrix} s_0\\0\\\hline t_0 *1\\0\end{pmatrix} |  \begin{pmatrix} s_0\\ -is_0 dx^1\wedge dr\\\hline t_0 * 1\\ it_0*dx^1 \wedge dr  \end{pmatrix} \Bigg\rangle e^{i\lambda(x_1+ir)} b(\theta) \,dx_1 \,dr \,d\theta=0\]
for all real $s_0$, $t_0$, $\lambda$ with $\lambda > 0$, and all smooth functions $b(\theta) \in C^\infty(S^{1})$.

The rest of the proof is identical with the proof of Theorem 1.1 in \cite{KSaU}; we give the argument for completeness. Let $q_{\alpha}$ and $q_{\beta}$ be the elements of $Q_1-Q_2$, interpreted as a $8 \times 8$ matrix, which correspond to the $(1,1)$th and $(5,5)$th elements, respectively. By Lemma \ref{lemma:reduction_schrodinger}
\begin{align*}
q_{\alpha} &= \frac{1}{2}\Delta(\alpha_1-\alpha_2) + \frac{1}{4} \langle d\alpha_1,d\alpha_1 \rangle - \frac{1}{4} \langle d\alpha_2,d\alpha_2 \rangle - \omega^2(\eps_1 \mu_1 - \eps_2 \mu_2), \\
q_{\beta} &= \frac{1}{2}\Delta(\beta_1-\beta_2) + \frac{1}{4} \langle d\beta_1,d\beta_1 \rangle - \frac{1}{4} \langle d\beta_2,d\beta_2 \rangle - \omega^2(\eps_1 \mu_1 - \eps_2 \mu_2).
\end{align*}
With the two choices $(s_0,t_0) = (1,0)$ and $(s_0,t_0) = (0,1)$, the special form of $Q_1$ and $Q_2$ in Lemma \ref{lemma:reduction_schrodinger} shows that we obtain the two identities 
\begin{eqnarray*}
 & \int_M e^{i\lambda(x_1+ir)} b(\theta) q_{\alpha}(x) \,dx_1 \,dr \,d\theta = 0, & \\
 & \int_M e^{i\lambda(x_1+ir)} b(\theta) q_{\beta}(x) \,dx_1 \,dr \,d\theta = 0. & 
\end{eqnarray*}

Let $T = \mR \times M_0$. Since $Q_1|_{\partial M} = Q_2|_{\partial M}$, the zero extensions of $q_{\alpha}$ and $q_{\beta}$ to $T \setminus M$ are continuous functions and the integrals above may be taken over $T$. Varying $b(\theta)$, it follows that for all $\theta$ we have 
\begin{equation*}
\int_0^{\infty} e^{-\lambda r} \left[ \int_{-\infty}^{\infty} e^{i\lambda x_1} q_{\alpha}(x_1,r,\theta) \,dx_1 \right] \,dr = 0
\end{equation*}
and similarly for $q_{\beta}$. Since $(r,\theta)$ are polar normal coordinates in $M_0$, the curves $r \mapsto (r,\theta)$ are geodesics in $M_0$. Denoting the expression in brackets by $f_{\alpha}(r,\theta)$ and varying the center of polar normal coordinates in Theorem \ref{solution to maxwell} and varying $\theta$, we obtain that 
\begin{equation*}
\int_0^{\infty} f_{\alpha}(\gamma(r)) \exp\left[ -\int_0^r \lambda \,ds \right] \,dr = 0
\end{equation*}
for all geodesics $\gamma$ in $M_0$ which begin and end at points of $\partial M_0$. This shows the vanishing of the geodesic ray transform of the function $f_{\alpha}$ with constant attenuation $-\lambda$. Since this transform is injective on simple two-dimensional manifolds \cite{SaU}, we have $f_{\alpha} \equiv 0$ for all $\lambda$. Thus 
\begin{equation*}
\int_{-\infty}^{\infty} e^{i\lambda x_1} q_{\alpha}(x_1,r,\theta) \,dx_1 = 0
\end{equation*}
for all $\lambda > 0$, $r$ and $\theta$. Uniqueness for the Fourier transform in $x_1$ shows that $q_{\alpha} \equiv 0$ in $M$. We obtain $q_{\beta} \equiv 0$ in $M$ by the exact same argument.

We have arrived at the following two equations in $M$:
\begin{eqnarray*}
 & -\frac{1}{2} \Delta(\alpha_1-\alpha_2) - \frac{1}{4}\langle d(\alpha_1+\alpha_2),d(\alpha_1-\alpha_2) \rangle + \omega^2(\eps_1 \mu_1 - \eps_2 \mu_2) = 0, & \\
 & -\frac{1}{2} \Delta(\beta_1-\beta_2) - \frac{1}{4}\langle d(\beta_1+\beta_2),d(\beta_1-\beta_2) \rangle + \omega^2(\eps_1 \mu_1 - \eps_2 \mu_2) = 0. & 
\end{eqnarray*}
Let $u = (\eps_1/\eps_2)^{1/2}$ and $v = (\mu_1/\mu_2)^{1/2}$. Then $\frac{1}{2}(\alpha_1-\alpha_2) = \log\,u$ and $\frac{1}{2}(\beta_1-\beta_2) = \log\,v$, and the equations become 
\begin{eqnarray*}
 & -\Delta(\log\,u) - (\eps_1 \eps_2)^{-1/2} \langle d(\eps_1 \eps_2)^{1/2}, d(\log\,u) \rangle + \omega^2(\eps_1 \mu_1 - \eps_2 \mu_2) = 0, & \\
 & -\Delta(\log\,v) - (\mu_1 \mu_2)^{-1/2} \langle d(\mu_1 \mu_2)^{1/2}, d(\log\,v) \rangle + \omega^2(\eps_1 \mu_1 - \eps_2 \mu_2) = 0. & 
\end{eqnarray*}
Multiplying the first equation by $(\eps_1 \eps_2)^{1/2}$ and the second by $(\mu_1 \mu_2)^{1/2}$ and using that $\delta(a\, dw) = -a \Delta w - \langle da, dw \rangle$, we obtain the equations 
\begin{eqnarray*}
 & \delta ((\eps_1 \eps_2)^{1/2} d (\log u)) + \omega^2 (\eps_1 \eps_2)^{1/2} \eps_2 \mu_2 (u^2 v^2 - 1) = 0, & \\
 & \delta ((\mu_1 \mu_2)^{1/2} d (\log v)) + \omega^2 (\mu_1 \mu_2)^{1/2} \eps_2 \mu_2 (u^2 v^2 - 1) = 0. & 
 \end{eqnarray*}
Since $d(\log u) = u^{-1} du$, $d(\log v) = v^{-1} dv$, we see that $u$ and $v$ satisfy the semilinear elliptic system 
\begin{eqnarray*}
 & \delta(\eps_2 d u) + \omega^2 \eps_2^2 \mu_2 (u^2 v^2 - 1)u = 0, & \\
 & \delta(\mu_2 d v) + \omega^2 \eps_2 \mu_2^2 (u^2 v^2 - 1)v = 0. &  
\end{eqnarray*}
The assumptions on $\eps_j$ and $\mu_j$ on $\partial M$ imply that $u = v = 1$ and $\partial_{\nu} u = \partial_{\nu} v = 0$ on $\partial M$. Also, the above equations imply that the pair $(\tilde{u},\tilde{v}) = (1,1)$ is a solution of the semilinear system in all of $M$. Unique continuation holds for this system (see for instance \cite[Appendix B]{KSaU}), and we obtain $u \equiv 1$ and $v \equiv 1$ in $M$. This proves that $\eps_1 \equiv \eps_2$ and $\mu_1 \equiv \mu_2$ in $M$ as required.
\end{proof}

\begin{proof}[Proof of Theorem \ref{main theorem2}]
Assume for simplicity that $\overline{\Omega} \subset \{ x_3 > 0 \}$ and that $x_0 = 0$. Let $\phi(x) = \log \,|x|$ be an LCW in $\mR^3$. As explained in \cite[Section 3B]{KS}, if one chooses new coordinates 
\[
y_1 = \log\,|x|, \quad y' = \frac{x}{\abs{x}}
\]
where $y' \in S^2_+ = \{ x \in S^2 \,;\, x_3 > 0 \}$, then the LCW becomes $\phi(y) = y_1$ and the Euclidean metric becomes 
\[
g(y) = c(y) \left( \begin{array}{cc} 1 & 0 \\ 0 & g_0(y') \end{array} \right)
\]
where $g_0$ is the standard metric on the sphere $S^2$. Since manifolds $\{ x \in S^2_+ \,;\, x_3 \geq c > 0 \}$ are simple, we have reduced matters to Theorem \ref{main theorem}. This proves Theorem \ref{main theorem2}.
\end{proof}

\section{Appendix}

For a compact oriented smooth manifold $M$ with smooth boundary, define 
\[
H_{\Delta} = \{u \in L^2(M, \Lambda M) \,|\, \Lap u \in L^2(M, \Lambda M)\},
\]
where $\Lap$ is the Hodge Laplacian. We let
\[
\|u\|^2_{H_{\Delta}} = \|u\|^2_{L^2} + \|\Lap u\|^2_{L^2}.  
\]
In this appendix, we will prove the following trace theorem for $H_{\Delta}$. 

\begin{lemma}\label{Trace}
The relative and absolute trace maps $u \mapsto (tu, t\delta u)$ and $u \mapsto (t*u, t\delta *u)$ initially defined on the space $C^{\infty}(M, \Lambda M)$ have extensions as bounded linear maps from $H_{\Delta}$ to $H^{-\half}(\partial M, \Lambda \partial M) \times H^{-3/2}(\partial M, \Lambda \partial M)$. The set $C^{\infty}(M, \Lambda M)$ is dense in $H_{\Delta}$ and thus these extensions are unique. The trace maps $u \mapsto (tu, td u)$ and $u \mapsto (t*u, td *u)$ have extensions to the same spaces, and one has for any $u \in H_{\Delta}$ 
\begin{gather}
t(du) = d_{\partial M}(tu), \label{tdu_identity} \\
t(d*u) = d_{\partial M}(t*u). \label{tdstaru_identity} 
\end{gather}
\end{lemma}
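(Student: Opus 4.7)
The plan is to proceed in three stages: density of smooth forms in $H_\Delta$, extension of the traces $(tu, t\delta u)$ and $(t*u, t\delta *u)$ by duality through a Green identity, and reduction of the remaining trace statements to these.

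For density, I would reduce via a partition of unity and coordinate charts to the half-space case, where graded forms are represented by vector-valued coefficient functions. In a boundary chart I shift the localized component by $\epsilon$ in the inward normal direction so that its support lies in the open half-space, and then convolve with a standard mollifier of scale less than $\epsilon$. Both operations are continuous on $L^2$, and the principal part of $\Delta$ in local coordinates is a second-order operator with smooth coefficients; a routine commutator estimate of Friedrichs type shows that the mollified shifted form converges to $u$ in $L^2$ while its Laplacian converges to $\Delta u$ in $L^2$. Summing over the partition of unity yields smooth approximants converging in $H_\Delta$.

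Next, for the traces, applying \eqref{iparts1} and \eqref{iparts2} repeatedly together with the pointwise adjointness $\langle \nu\wedge\alpha,\beta\rangle = \langle\alpha,i_\nu\beta\rangle$ gives, for smooth $u,v$, the Green identity
\[
B(u,v) := (-\Delta u | v)_M - (u | -\Delta v)_M = (tu | i_\nu dv)_{\partial M} + (t\delta u | i_\nu v)_{\partial M} - (i_\nu u | t\delta v)_{\partial M} - (i_\nu du | tv)_{\partial M},
\]
with $|B(u,v)| \leq C \|u\|_{H_\Delta}\|v\|_{H^2}$. The key technical input is a lifting lemma in the spirit of Lemma 3.3.2 of \cite{gunther}: for every pair $g_1 \in H^{\half}(\partial M,\Lambda\partial M)$ and $g_2 \in H^{3/2}(\partial M,\Lambda\partial M)$ there exists $v \in H^2(M,\Lambda M)$ satisfying $tv = 0$, $t\delta v = 0$, $i_\nu v = g_2$, and $i_\nu dv = g_1$, with $\|v\|_{H^2} \leq C(\|g_1\|_{H^{\half}} + \|g_2\|_{H^{3/2}})$. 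One proves this in a collar $\partial M \times [0,\delta)$ by decomposing $v = v_\parallel + \nu\wedge v_\perp$, expressing $\delta v$ in terms of $v_\parallel$, $v_\perp$, and their tangential and normal derivatives, translating the four boundary conditions into prescribed values and first normal derivatives of $v_\parallel$ and $v_\perp$, and applying the inverse trace theorem followed by cutoff. Substituting such $v$ into $B$ yields $\|tu\|_{H^{-\half}} + \|t\delta u\|_{H^{-3/2}} \leq C\|u\|_{H_\Delta}$ on the dense subspace $C^{\infty}(M,\Lambda M)$, and density produces the continuous extension. The absolute trace is handled by applying this to $*u$, which lies in $H_\Delta$ since $*$ is an $L^2$-isometry commuting with $\Delta$.

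Finally, for smooth forms the naturality of the exterior derivative under the pullback $t = i^*$ gives $t(du) = d_{\partial M}(tu)$ and $t(d*u) = d_{\partial M}(t*u)$. Since $d_{\partial M}: H^{-\half}(\partial M,\Lambda\partial M) \to H^{-3/2}(\partial M,\Lambda\partial M)$ is bounded, these identities extend by continuity to all $u \in H_\Delta$, defining $t(du)$ and $t(d*u)$ in $H^{-3/2}(\partial M,\Lambda\partial M)$ and yielding both the boundedness of the maps $u\mapsto(tu, tdu)$ and $u\mapsto(t*u, td*u)$ and the identities \eqref{tdu_identity}--\eqref{tdstaru_identity}. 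The principal obstacle is the lifting lemma for $v$: encoding four independent boundary conditions into a single $H^2$ form requires a careful tangential-normal decomposition in the collar and explicit use of the local form of $\delta$, but is routine once that decomposition is set up.
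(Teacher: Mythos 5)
Your duality construction of the traces and your treatment of \eqref{tdu_identity}--\eqref{tdstaru_identity} essentially coincide with the paper's proof: your Green identity is, after rewriting $i_\nu u$ and $i_\nu du$ in terms of $t*u$ and $td*u$ via the Hodge star, the paper's identity \eqref{HodgeIbyPt}, and your lifting lemma plays exactly the role of Lemma \ref{LemmaofRequirement}. The genuine divergence, and the gap, is in the density step. The translation-plus-mollification scheme with a ``routine commutator estimate of Friedrichs type'' is a first-order tool: Friedrichs' lemma gives uniform $L^2\to L^2$ bounds and strong convergence for $[a,\rho_\delta*]\partial_j$, but the second-order commutator $[a^{ij}\partial_i\partial_j,\rho_\delta*]$ is \emph{not} uniformly bounded on $L^2$ --- already for $a(x)=x_1$ on $\R^n$ it is the Fourier multiplier $-i\xi_1^2\,\delta\,(\partial_1\hat\rho)(\delta\xi)$, whose supremum is of order $\delta^{-1}$. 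The normal translation fares no better: $[\Delta,T_\eps]u$ contains terms $\bigl(a^{ij}(x)-a^{ij}(x+\eps e_n)\bigr)\partial_i\partial_j u(x+\eps e_n)$, and membership in $H_\Delta$ gives no $L^2$ control on individual second derivatives of $u$; interior elliptic regularity only yields $\|u\|_{H^2(\{x_n>\eps\})}=O(\eps^{-2})$, so the $O(\eps)$ coefficient gain is insufficient. Since $H_\Delta$ genuinely contains elements that are $L^2$ and no better (harmonic extensions of $H^{-1/2}$ boundary data), this cannot be repaired by upgrading the a priori regularity. As written, the density claim --- on which your definition of the trace extensions then leans --- is not proved.

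The paper avoids exactly this difficulty by reversing the logical order. The duality formula $(tu\mid h)_{\partial M}:=(u\mid\Delta v_{0,h})_M-(\Delta u\mid v_{0,h})_M$ makes sense for \emph{every} $u\in H_\Delta$, not only for smooth $u$, so all four traces are defined on $H_\Delta$ with no density input; density is then proved afterwards \emph{using} these traces: one chooses smooth $(v,f,h)$ close to $(\Delta u, tu, t*u)$ in $L^2\times H^{-\half}\times H^{-\half}$, solves $\Delta u'=v$, $tu'=f$, $t*u'=h$ with $u'$ smooth (Lemma \ref{lemma_poisson_solvability}), and estimates $\|u-u'\|_{L^2}$ by pairing against $\Delta w$ for $w\in H^2$ with $tw=t*w=0$, such $\Delta w$ exhausting $L^2$. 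To complete your argument you must either supply a correct regularization for the second-order graph space (substantially harder than the first-order case) or adopt this reordering, defining the traces by duality directly on $H_\Delta$ and letting the density proof use them.
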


Lemma \ref{lemma_hdelta_properties} follows immediately from this result. To begin the proof of Lemma \ref{Trace}, we will need the following two lemmas. Here $\nu$ is meant to signify the normal vector field on $\partial M$, extended into $M$ in a standard way.

\begin{lemma} \label{lemma_poisson_solvability}
Let $j \geq 0$. For any $v \in H^j(M, \Lambda M)$ and $f, h \in H^{j+3/2}(\partial M, \Lambda \partial M)$, there is a unique $u \in H^{j+2}(M, \Lambda M)$ that solves 
\[
\Delta u = v \text{ in } M, \quad tu = f, \ \ t*u = h.
\]
One has $\norm{u}_{H^{j+2}} \leq C(\norm{v}_{H^j} + \norm{f}_{H^{j+3/2}} + \norm{h}_{H^{j+3/2}})$.
\end{lemma}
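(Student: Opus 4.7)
The plan is to recognize this as a standard elliptic Dirichlet boundary value problem for the Hodge Laplacian on the bundle $\Lambda M$, reduce to homogeneous boundary data by subtracting a lift, and then invoke classical elliptic theory.

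The first observation is that on a $3$-manifold the pair $(tu, t*u)$ specifies $u|_{\partial M}$ pointwise: for a $k$-form, $tu$ accounts for the $\binom{2}{k}$ tangential components while $t*u$, viewed as a tangential $(2-k)$-form on $\partial M$, captures the remaining $\binom{2}{k-1}$ components coming from the normal direction, so together they recover all $\binom{3}{k}$ components of $u$ on $\partial M$. By the classical Sobolev trace extension theorem applied componentwise in local trivializations, there exists $u_0 \in H^{j+2}(M,\Lambda M)$ with $tu_0 = f$, $t*u_0 = h$, and
\[\|u_0\|_{H^{j+2}(M)} \leq C\bigl(\|f\|_{H^{j+3/2}(\partial M)} + \|h\|_{H^{j+3/2}(\partial M)}\bigr).\]
Setting $w = u - u_0$, the task reduces to solving the homogeneous full Dirichlet problem
\[\Delta w = v - \Delta u_0 =: \tilde v \in H^j(M,\Lambda M), \qquad tw = t*w = 0.\]

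To handle this I appeal to the Bochner--Weitzenb\"ock identity $-\Delta = \nabla^*\nabla + \mathcal R$, which exhibits $-\Delta$ as a second-order elliptic operator on $\Lambda M$ whose principal symbol on each form degree equals $|\xi|_g^2$ times the identity, with inter-component coupling only at zeroth order. The condition $w|_{\partial M}=0$ is, in any local orthonormal frame of $\Lambda M$, the scalar Dirichlet condition componentwise, and thus trivially satisfies the Shapiro--Lopatinski complementing condition. Standard elliptic regularity (see, e.g., \cite[Ch.~5]{T1}) then gives the a priori estimate
\[\|w\|_{H^{j+2}(M)} \leq C\bigl(\|\Delta w\|_{H^j(M)} + \|w\|_{L^2(M)}\bigr)\]
and reduces existence to the Fredholm alternative, i.e., to triviality of the kernel.

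For uniqueness, suppose $\Delta w = 0$ with $w|_{\partial M}=0$. Integration by parts using \eqref{iparts1}--\eqref{iparts2} yields $\|dw\|^2 + \|\delta w\|^2 = 0$, so $w$ is a harmonic form (both $dw = 0$ and $\delta w = 0$) with vanishing pointwise trace on $\partial M$. It is standard in Hodge theory on manifolds with boundary that such a $w$ must vanish identically: locally near $\partial M$ write $w = d\eta$ via Poincar\'e's lemma, observe that $w|_{\partial M}=0$ forces the full gradient of (the components of) $\eta$ to vanish on $\partial M$, and conclude from Cauchy-data unique continuation for the harmonic equation satisfied by $\eta$ that $w \equiv 0$ in a neighborhood of $\partial M$; then Aronszajn-type unique continuation for the elliptic system $-\Delta w = 0$ propagates the vanishing globally. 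Triviality of the kernel absorbs the $\|w\|_{L^2}$ term to give the clean estimate $\|w\|_{H^{j+2}} \leq C\|\tilde v\|_{H^j}$, and setting $u = u_0 + w$ produces the desired solution with
\[\|u\|_{H^{j+2}(M)} \leq C\bigl(\|v\|_{H^j(M)} + \|f\|_{H^{j+3/2}(\partial M)} + \|h\|_{H^{j+3/2}(\partial M)}\bigr).\]
The main obstacle is precisely the uniqueness step: the energy identity only shows that a kernel element is harmonic, and one needs a Cauchy-data unique continuation input to rule out nontrivial harmonic forms with vanishing pointwise trace; the remaining pieces --- the lift construction, ellipticity, and the a priori bound --- are routine applications of standard elliptic machinery.
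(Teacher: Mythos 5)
Your argument is correct in outline, but note first that the paper does not actually prove this lemma: its entire proof is the citation ``This is \cite[Theorem 3.4.10]{gunther}.'' What you have written is essentially the standard argument underlying that reference --- the observation that $(tu,t*u)$ is equivalent to the full pointwise Dirichlet datum $u|_{\partial M}$, the lift to reduce to homogeneous boundary data, ellipticity of the Hodge Laplacian with scalar principal symbol so that the componentwise Dirichlet condition satisfies Shapiro--Lopatinski, and the Fredholm alternative (which yields surjectivity from injectivity here because the Dirichlet problem for $d\delta+\delta d$ is formally self-adjoint, hence of index zero --- worth saying explicitly). All of that is sound.

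The one step that does not work as written is the uniqueness argument. The energy identity correctly reduces matters to showing that $dw=\delta w=0$ together with $w|_{\partial M}=0$ forces $w\equiv 0$, but the local Poincar\'e-lemma detour is flawed: once $\deg w\geq 2$, the vanishing of $w=d\eta$ on $\partial M$ only annihilates the \emph{antisymmetrized} first derivatives of the components of $\eta$, not the full gradient, and $\eta$ satisfies no elliptic equation until a gauge is fixed, so the claimed Cauchy-data vanishing for $\eta$ is not established. The repair is to work with $w$ directly: in boundary normal coordinates, the components of $dw$ carrying the normal index equal $\pm\partial_{\nu}$ of the purely tangential components of $w$ plus tangential derivatives and zeroth-order terms, while the components of $\delta w$ without the normal index equal $\pm\partial_{\nu}$ of the normal components plus tangential derivatives and zeroth-order terms; since $w$ vanishes identically on $\partial M$, all tangential derivatives and undifferentiated terms vanish there, so $dw=\delta w=0$ gives $\nabla_{\nu}w|_{\partial M}=0$. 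Hence $w$ has vanishing Cauchy data for the elliptic system $\Delta w=0$ with diagonal principal part, and Aronszajn unique continuation gives $w\equiv 0$. With that substitution your proof is complete and self-contained, which is more than the paper itself provides.
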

\begin{proof}
This is \cite[Theorem 3.4.10]{gunther}.
\end{proof}

\begin{lemma}\label{LemmaofRequirement}
For any $(f,h) \in H^{\frac{3}{2}}(\partial M, \Lambda \partial M) \times H^{\frac{1}{2}}(\partial M, \Lambda \partial M)$, there is $v \in H^2(M, \Lambda M)$ such that 
\[
\|v\|_{H^2} \lesssim \|f\|_{H^{3/2}} + \|h\|_{H^{1/2}}
\]
and $ti_{\nu}v = f$, $ti_{\nu}*v = ti_{\nu} d*v = 0$, and $ti_{\nu}d v = h$. 
\end{lemma}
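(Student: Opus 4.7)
The plan is to construct $v$ by prescribing $0$th- and $1$st-order Taylor data in a collar neighborhood of $\partial M$ and then extending to $M$ by a cutoff. Since the boundary conditions split degree by degree, I work with a fixed $k$-form component; the graded case then follows by summing.

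First I would fix a collar $U \cong [0,\varepsilon) \times \partial M$ with coordinates $(\rho,y)$ so that $\rho = 0$ corresponds to $\partial M$, $\nu = \partial_\rho$, and the metric has the product form $g = d\rho^2 \oplus g_\rho$ for a smooth family $g_\rho$ of Riemannian metrics on $\partial M$. In this collar any $k$-form $v$ decomposes uniquely as $v = v_T + d\rho \wedge v_N$, with $v_T$ a $k$-form and $v_N$ a $(k-1)$-form in $y$, each depending smoothly on $\rho$. A direct computation in this splitting gives
\[
i_\nu v = v_N, \qquad i_\nu * v = \pm *_{g_\rho} v_T,
\]
\[
i_\nu dv = \partial_\rho v_T - d_y v_N, \qquad i_\nu d * v = \pm\, \partial_\rho(*_{g_\rho} v_N) \mp d_y(*_{g_\rho} v_T).
\]
Restricting these four identities to $\rho = 0$, the four boundary conditions of the lemma translate into the Taylor prescriptions
\[
v_T|_{\rho=0} = 0, \quad v_N|_{\rho=0} = f, \quad \partial_\rho v_T|_{\rho=0} = h + d_{\partial M} f, \quad \partial_\rho v_N|_{\rho=0} = \Psi f,
\]
where $\Psi f := -(*_{g_\rho}|_{\rho=0})^{-1}\bigl((\partial_\rho *_{g_\rho})|_{\rho=0} f\bigr)$ is a zeroth-order algebraic operator determined by the second fundamental form of $\partial M$.

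Next I would apply the standard right inverse of the $H^2$ trace map on $U$ to construct $v_T, v_N \in H^2(U)$ realising the above Taylor data at $\rho = 0$, satisfying
\[
\|v_T\|_{H^2(U)} + \|v_N\|_{H^2(U)} \lesssim \|f\|_{H^{3/2}} + \|h + d_{\partial M} f\|_{H^{1/2}} + \|\Psi f\|_{H^{1/2}} \lesssim \|f\|_{H^{3/2}} + \|h\|_{H^{1/2}}.
\]
Multiplying $v_T + d\rho \wedge v_N$ by a cutoff $\chi(\rho)$ that equals $1$ near $\rho = 0$ and is compactly supported in $[0,\varepsilon)$, and extending by zero to $M \setminus U$, then produces the required $v \in H^2(M, \Lambda M)$ with the stated norm bound.

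The main obstacle is tracking Hodge-star sign conventions through the product decomposition — in particular the identities for $i_\nu * v$ and $i_\nu d * v$ — since a sign error would change the Taylor prescription for $\partial_\rho v_N|_{\rho=0}$. A secondary point is verifying that $\Psi$ really is zeroth-order, so that $\Psi f \in H^{3/2} \hookrightarrow H^{1/2}$ with $\|\Psi f\|_{H^{1/2}} \lesssim \|f\|_{H^{3/2}}$; this follows from smoothness in $\rho$ of the family $g_\rho$, and hence of $*_{g_\rho}$.
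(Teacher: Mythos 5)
Your proposal is correct and follows essentially the same route as the paper: both work in boundary normal (collar) coordinates, split $v$ into tangential and normal parts, translate the four conditions into prescriptions of the zeroth and first order traces of those parts, and invoke the right inverse of the $H^2$ trace map. The only difference is organizational --- the paper treats the cases $f=0$ and $h=0$ separately and cites \cite[Lemma 3.4]{CST} for the identity $ti_\nu dv = t\nabla_\nu v_{\|} + Stv_{\|} - d'ti_\nu v$, whereas you derive the analogous identities directly from the product decomposition and handle both inputs at once, which makes the correction terms $d_{\partial M}f$ and $\Psi f$ explicit.
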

\begin{proof}
Let first $f=0$. Any boundary point has a neighborhood $U \subset M$ where boundary normal coordinates $(x_1, \ldots, x_n)$ exist, so that the boundary is given by $\{ x_n = 0 \}$ and $x_n$ coincides with the direction of $\nu$. We first define $v$ in $U$ using these coordinates.  Then $v$ is represented as a sum of objects of the form
\[
v_{i_1, \ldots, i_k} dx_{i_1} \wedge \ldots \wedge dx_{i_k}
\]
for $1 \leq i_k \leq n$, where $v_{i_1, \ldots, i_k}$ are functions.  Moreover, $h$ can be represented as a sum of objects of the form
\[
h_{i_1, \ldots, i_k} dx_{i_1} \wedge \ldots \wedge dx_{i_k},
\]
for $1 \leq i_k < n$.

We will define $v_{i_1, \ldots, i_k}$ to be identically zero if any $i_k = n$, and choose the remaining $v_{i_1, \ldots, i_k}$ to be functions such that 
\begin{eqnarray*}
v_{i_1, \ldots, i_k} &=& 0 \mbox{ and } \\
\partial_{\nu}v_{i_1, \ldots, i_k} &=& h_{i_1, \ldots, i_k} \\
\end{eqnarray*}
on $U \cap \partial M$. Observe that when chosen this way all components of the form $v_{i_1, \ldots, i_k} dx_{i_1} \wedge \ldots \wedge dx_{i_k} $ vanish on the boundary and  $i_\nu v_{i_1, \ldots, i_k} dx_{i_1} \wedge \ldots \wedge dx_{i_k} $ vanish in a neighborhood of the boundary. We can arrange for $\|v_{i_1, \ldots, i_k}\|_{H^2(U)} \lesssim \|h_{i_1, \ldots, i_k}\|_{H^{1/2}(U \cap \partial M)}$, as an inequality for functions. Using a suitable partition of unity we can define $v$ near $\partial M$ and extend smoothly as an element $v \in H^2(M, \Lambda M)$, and so we will obtain the inequality
\[
\|v\|_{H^2(M, \Lambda M)} \lesssim \|h\|_{H^{\half}(\partial M, \Lambda \partial M)}
\]
for $v$ and $h$ as forms.  Note that our definition is made to guarantee that $v|_{\partial M}= 0$, $t\grad_{\nu} v = h$, and $i_{\nu} v = 0$ in a neighbhorhood of the boundary of $M$.  Now by Lemma 3.4 of ~\cite{CST}, 
\[
ti_{\nu} dv = t\grad_{\nu} v_{\|} + Stv_{\|} -d'ti_{\nu}v
\]
where $S$ is a bounded endomorphism on $\Lambda M$ and $d' $ denotes the exterior derivative on $\partial M$. The last two terms vanish because $v$ is zero on $\partial M$.

Then we are left with 
\[
ti_{\nu} dv = t \grad_{\nu} v_{\|} = t \grad_{\nu} v = h.
\]
 Moreover, $ti_{\nu} v = ti_{\nu} *v  = 0$ since $v$ is zero on the boundary of $M$, and $ti_{\nu} d *v = 0$ since 
\[
ti_{\nu} d*v = t\grad_{\nu} (*v)_{\|} + St(*v)_{\|} -d'ti_{\nu}*v.
\]
and $(*v)_{\|} = *v_{\perp} = 0 $ in a neighbourhood of the boundary.  This proves the lemma when $f=0$. The next step is to consider the case where $f$ is nonzero but $h=0$. An argument in boundary normal coordinates similar to the one above yields a form $v$ with $ti_{\nu}v = f$, $ti_{\nu}*v = ti_{\nu} d*v = ti_{\nu}d v = 0$, and $\norm{v}_{H^2} \lesssim \norm{f}_{H^{3/2}}$. Combining these two cases proves the lemma.  
\end{proof}
\begin{proof}[Proof of Lemma \ref{Trace}]
Let $u,v \in H^2(M, \Lambda M)$.  Then using the integration by parts formulas \eqref{iparts1} and \eqref{iparts2} we get the identity \begin{equation}\label{HodgeIbyP}
(u | \Lap v)_M - (\Lap u| v)_M = (u | i_{\nu}dv - \nu \wedge \delta v)_{\partial M} + ( \nu \wedge \delta u - i_{\nu}du | v)_{\partial M}.
\end{equation}
We can rewrite the boundary terms in terms of the relative and absolute boundary values of $u$, by using the following formulas valid for any $1$-form $\xi$ and any $k$-form $\eta$ (see for example ~\cite{CST}):
\begin{gather*}
i_{\xi} \eta = (-1)^{n(k-1)} *\xi \wedge * \eta, \quad ** \eta = (-1)^{k(n-k)} \eta, \quad \delta \eta = (-1)^{(k-1)(n-k)+1} *d* \eta, \\
\langle *u, *v \rangle = \langle u, v \rangle, \quad \langle \xi \wedge u, v \rangle = \langle u, i_{\xi} v \rangle, \quad \langle i_{\nu} u, v \rangle|_{\partial M} = \langle t i_{\nu} u, t v \rangle
\end{gather*}
After a computation (first assuming that $u$ and $v$ are $k$-forms and then summing over $k$), \eqref{HodgeIbyP} can be written as  
\begin{multline}
(u | \Lap v)_M - (\Lap u| v)_M = (tu| t i_{\nu} d v)_{\partial M}  +(t*u| ti_{\nu}d*v)_{\partial M} \\
 + (t\delta u| ti_{\nu}v)_{\partial M} + (t \delta * u|ti_{\nu} * v)_{\partial M}. \label{HodgeIbyPt}
\end{multline}

Now if $u \in H_{\Delta}$, we wish to define the relative boundary values of $u$ by \eqref{HodgeIbyPt}.  Given any $(f,h) \in H^{\frac{3}{2}}(\partial M, \Lambda \partial M) \times H^{\frac{1}{2}}(\partial M, \Lambda \partial M)$, we use Lemma \ref{LemmaofRequirement} to choose $v = v_{f,h}$ such that $ti_{\nu} v = f$, $ti_{\nu} d v = h$, $ti_{\nu}*v= ti_{\nu} d * v = 0$ and $\|v\|_{H^2} \lesssim \|f\|_{H^{3/2}} + \|h\|_{H^{1/2}}$.  By \eqref{HodgeIbyPt} we have for any $u \in H^2(M, \Lambda M)$ 
\begin{align*}
(tu| h)_{\partial M} &= (u | \Lap v_{0,h})_M - (\Lap u| v_{0,h})_M, \\
(t\delta u |f)_{\partial M} &= (u | \Lap v_{f,0})_M - (\Lap u| v_{f,0})_M.
\end{align*}
Since
\[
\|tu\|_{H^{-\half}} = \sup_{\|h\|_{H^{\half}} = 1} |(tu| h)_{\partial M}|, \qquad \|t\delta u\|_{H^{-3/2}} = \sup_{\|f\|_{H^{3/2}} = 1} |(t\delta u| f)_{\partial M}|,
\]
it follows that 
\[
\|tu\|_{H^{-\half}} + \|t\delta u\|_{H^{-3/2}} \lesssim \|u\|_{H_{\Delta}}, \qquad u \in H^2(M, \Lambda M).
\]
Thus for any $u \in H_{\Delta}$, we may use the above formulas to define $(tu, t\delta u)$ as an element of $H^{-1/2}(\partial M, \Lambda(\partial M)) \times H^{-3/2}(\partial M, \Lambda(\partial M))$. Applying this argument to $*u$ shows that $(t*u, t\delta *u)$ are well defined for $u \in H_{\Delta}$ in a similar way. The identity \eqref{HodgeIbyPt} then remains true for any $u \in H_{\Delta}$ and any $w \in H^2(M, \Lambda M)$.

We now claim that $C^{\infty}(M,\Lambda M)$ is dense in $H_{\Delta}$. Given this fact, it follows that the maps $u \mapsto (tu, t\delta u)$ and $u \mapsto (t*u, t\delta *u)$ are bounded from $H_{\Delta}$ to $H^{-1/2} \times H^{-3/2}$ and are uniquely defined by the corresponding maps acting on $C^{\infty}$. Finally, \eqref{tdu_identity} and \eqref{tdstaru_identity} are true for $u \in C^{\infty}(M, \Lambda M)$ by the fact that $d$ commutes with the pull-back to the boundary $t$ 
and thus they extend to $u \in H_{\Delta}$ by density.

It remains to show that $C^{\infty}(M,\Lambda M)$ is dense in $H_{\Delta}$. Suppose that $u \in H_{\Delta}$. Then $\Lap u \in L^2(M, \Lambda M)$ and $tu, t*u \in H^{-1/2}(\partial M, \Lambda \partial M)$, so for any $\e > 0$ there exist $v \in \Om(M)$ and $f, h \in \Om(\partial M)$ such that 
\begin{eqnarray}
\label{u-v bounds}
\|\Lap u - v\|_{L^2} + \|f - tu\|_{H^{-\half}} + \|h - t* u\|_{H^{-1/2}} < \e.
\end{eqnarray}
Now by Lemma \ref{lemma_poisson_solvability} there exists a smooth form $u'$ such that $\Lap u' = v$, with $tu' = f$ and $t* u' = h$. Then if $w \in H^2(M, \Lambda M)$ satisfies $tw = t*w = 0$, which implies $ti_{\nu} w = ti_{\nu}*w = 0$, we get by \eqref{HodgeIbyPt} that 
\begin{equation*}
(u - u'|\Lap w)_M =(\Lap(u - u')|w)_M  + (t(u - u')| t i_{\nu}d w)_{\partial M}  +(t*(u-u')| ti_{\nu}d*w)_{\partial M}.\end{equation*}
Combining this with inequality \eqref{u-v bounds} we obtain
\[ \abs{(u - u'|\Lap w)_M} \leq C\e\|w\|_{H^2}. \]
Now by Lemma \ref{lemma_poisson_solvability} every form $z \in L^2(M, \Lambda M)$ can be written as $\Lap w$ for some $w$ with $tw = t*w = 0$. Moreover, we will have
\[ \|w\|_{H^2} \lesssim \|z\|_{L^2}. \] 
Thus we get that
\[ \|u - u'\|_{L^2} \lesssim \e \]
 and hence
\[ \|u - u'\|_{H_{\Delta}} \lesssim \e. \]
Therefore there is a sequence of smooth forms $u_k$ such that $u_k \rightarrow u$ in $H_{\Delta}$.
\end{proof}

\bibliographystyle{alpha}

\providecommand{\bysame}{\leavevmode\hbox to3em{\hrulefill}\thinspace}
\providecommand{\href}[2]{#2}

\end{document}